\documentclass[12pt, a4paper]{article}
\year=2013\month=2\day=13\relax



\usepackage{amsmath,amssymb}
\usepackage{bm}
\bmdefine{\eee}{e}
\bmdefine{\sss}{s}
\bmdefine{\xxx}{x}
\bmdefine{\yyy}{y}
\bmdefine{\zzz}{z}

\newcommand{\CCC}{\mathbb{C}}
\newcommand{\ZZZ}{\mathbb{Z}}

\newcommand{\PPP}{\mathbb{P}}
\newcommand{\KKK}{\mathbb{K}}
\newcommand{\SSS}{\mathbb{S}}

\renewcommand{\KKK}{{K}}

\newcommand{\define}{\mathrel{:=}}
\newcommand{\definebycond}{\stackrel{\mathrm{def}}{\iff}}

\newcommand{\gor}{Gorenstein}

\newcommand{\lm}{{\rm lm}}

\newcommand{\glin}{{\mathrm{GL}}}
\newcommand{\slin}{{\mathrm{SL}}}

\newcommand{\supp}{{\mathrm{supp}}}
\newcommand{\isupp}{{\mathrm{isupp}}}

\newcommand{\transpose}{^\top}
\newcommand{\sym}{{\mathrm{Sym}}}

\newcommand{\kgammagammaprime}{K[\Gamma(X,Y)]\cap K[\Gamma'
\begin{pmatrix}X\\ Y\end{pmatrix}]}

\newtheorem{thm}{Theorem}[section]
\newtheorem{fact}[thm]{Fact}
\newtheorem{example}[thm]{Example}
\newtheorem{lemma}[thm]{Lemma}
\newtheorem{cor}[thm]{Corollary}
\newtheorem{definition}[thm]{Definition}
\newtheorem{prop}[thm]{Proposition}
\newtheorem{remark}[thm]{Remark}


%
%
\makeatletter
\newcommand{\mysloppy}{\tolerance 9999 \hfuzz .5\p@ \vfuzz .5\p@}
\makeatother
%
%

\newcommand{\bigzerou}{\smash{\lower1.7ex\hbox{\bg 0}}}

\newcommand{\bigastu}{\smash{\lower1.7ex\hbox{\bg *}}}

\newcommand{\refeq}[1]{(\ref{#1})}

\numberwithin{equation}{section}

\newcommand{\mylabel}[1]{\label{#1}{\tt #1}}
\let\mylabel\label

\title{%
Action of special linear groups to the tensor of indeterminates,
classical invariants of binary forms and hyperdeterminant%
\footnote{%
The author is supported partially by 
JSPS KAKENHI Grant Number 24540040.}%
}
\author{Mitsuhiro MIYAZAKI\footnote{Kyoto University of Education, \tt g53448@kyokyo-u.ac.jp}
}

\date{}

\begin{document}
\mysloppy

\maketitle

\begin{abstract}
In this paper, we study the ring of invariants under the action of
$\slin(m,K)\times\slin(n,K)$ and 
$\slin(m,K)\times\slin(n,K)\times\slin(2,K)$
on the 3-dimensional array of indeterminates of form $m\times n\times 2$,
where $K$ is an infinite field.
And we show that if $m=n\geq 2$, then the ring of 
$\slin(n,K)\times\slin(n,K)$-invariants is generated by $n+1$
algebraically independent elements over $K$ and the action of
$\slin(2,K)$ on that ring is identical with the one defined in the
classical invariant theory of binary forms.
We also reveal the ring of $\slin(m,K)\times\slin(n,K)$-invariants and
$\slin(m,K)\times\slin(n,K)\times\slin(2,K)$-invariants completely
in the case where $m\neq n$.
\end{abstract}

\section{Introduction}

High-dimensional array data analysis is now rapidly developing and
being successfully applied in various fields.
A high-dimensional array datum is called a tensor in those 
communities.
To be precise, a $d$-dimensional array datum $(a_{i_1 i_2 \cdots i_d})_{1\leq i_j\leq m_j}$
is called a $d$-tensor or an $m_1\times\cdots\times m_d$-tensor.

A $2$-tensor is no other than a matrix.
For a matrix of indeterminates, that is, a matrix whose entries are independent
indeterminates, there are many results about the action of 
various subgroups of the general linear group and the rings of invariants under
this action.

To be precise, let $X=(X_{ij})$ be an $m\times n$ matrix of indeterminates,
that is, a matrix whose entries are independent indeterminates,
and $G$ a subgroup of $\glin(m,\KKK)$, where $\KKK$ is an infinite field.
For $P\in G$, one defines the action of $P$ on $\KKK[X]=\KKK[X_{ij}\mid 1\leq i\leq m$,
$1\leq j\leq n]$ by the $\KKK$ algebra homomorphism sending $X$ to $P\transpose X$.
Let us state this another words.
Let $V_1$ and $V_2$ be vector spaces over $\KKK$ with dimensions $m$ and $n$
respectively.
Since $G$ acts on $V_1$ linearly, one can extend it to the action of $G$
on $\sym(V_1\otimes V_2)$, the symmetric algebra of $V_1\otimes V_2$
over $\KKK$.

In this paper, we first consider the action of the product of two special linear groups
on a $3$-tensor of indeterminates, that is, a $3$-tensor whose entries are
independent indeterminates.
Let $T=(T_{ijk})_{1\leq i\leq m, 1\leq j\leq n, 1\leq k\leq 2}$ be an
$m\times n\times 2$-tensor of indeterminates.
Set $X_k=(T_{ijk})_{1\leq i\leq m, 1\leq j\leq n}$ for $k=1$, $2$.
Then $\slin(m,\KKK)\times \slin(n,\KKK)$ acts on the polynomial ring
$\KKK[T]=\KKK[T_{ijk}\mid 1\leq i\leq m$, $1\leq j\leq n$, $1\leq k\leq 2]$
by the $\KKK$-algebra homomorphism sending $X_k$ to 
$P\transpose X_kQ$ for $k=1$, $2$, where $(P,Q)\in \slin(m,\KKK)\times \slin(n,\KKK)$.

To state it another words, let $V_1$, $V_2$ and $V_3$ be $\KKK$-vector spaces
of dimensions $m$, $n$ and $2$ respectively.
Then the natural actions of $\slin(m,\KKK)$ on $V_1$ and $\slin(n,\KKK)$ on $V_2$
induces an action of $\slin(m,\KKK)\times \slin(n,\KKK)$ on $\sym(V_1\otimes V_2\otimes V_3)$.
And we show the following facts.
(1) If $m=n\geq 2$, then $K[T]^{\slin(n,K)\times\slin(n,K)}$
is generated by $n+1$ algebraically independent elements over $K$.
(2) If $n=m+\gcd(m,n)$, then $K[T]^{\slin(m,K)\times \slin(n,K)}$ is generated
by one element over $K$.
(3) If $m<n$ and $n\neq m+\gcd(m,n)$, then $K[T]^{\slin(m,K)\times\slin(n,K)}=K$.
(See Theorems \ref{thm:m=n} and \ref{thm:m<n}.)

Next we 
consider the action of 
$\slin(m,K)\times \slin(n,K)\times \slin(2,K)$ on $K[T]$,
in particular, the action of $\slin(2,K)$ on $K[T]^{\slin(m,K)\times \slin(n,K)}$.
And we show, above all things,
 that in the case where $m=n$ and $K=\CCC$, this action of $\slin(2,\CCC)$ on 
$\sym(V_1\otimes V_2\otimes V_3)^{\slin(n,\CCC)\times \slin(n,\CCC)}
=\CCC[T]^{\slin(n,\CCC)\times\slin(n,\CCC)}$ coincides with the action
of classical invariant theory of binary forms.
See Theorem \ref{thm:id to cl}.
We also make a remark on hyperdeterminant defined by 
Gelfand, Kapranov and Zelvinsky \cite{gkz1}.
See Corollary \ref{cor:nec lm}.
Since the theory of classical invariants dates back to nineteenth century,
using the accumulated results on classical invariants of binary forms
(\cite{sf}, \cite{hil}, \cite{dol}, \cite{shi}, \cite{dl}, \cite{bp1} and \cite{bp2})
and combining the results of this paper, one obtains much information about
$\slin(n,\CCC)\times\slin(n,\CCC)\times\slin(2,\CCC)$-invariant polynomials in
$\{T_{ijk}\}_{1\leq i\leq n,1\leq j\leq n,1\leq k\leq 2}$ and hyperdeterminant
of format $n-1$, $n-1$, 1.

The organization of this paper is as follows.
After establishing notation and recalling basic facts in Section \ref{sec:prelim},
we study in Section \ref{sec:sl sl} the invariants in $K[T]$ under
the action of $\slin(m,K)\times \slin(n,K)$ stated above.
In Section \ref{sec:sl sl sl}, we study the invariants in $K[T]$ under
the action of $\slin(m,K)\times \slin(n,K)\times\slin(2,K)$.
And show that the following facts.
(1) If $m\neq n$, then 
$K[T]^{\slin(m,K)\times\slin(n,K)\times\slin(2,K)}=
K[T]^{\slin(m,K)\times\slin(n,K)}$.
(2) If $m=n$ and $K=\CCC$, then the action of $\slin(2,\CCC)$ on
$\CCC[T]^{\slin(m,\CCC)\times\slin(n,\CCC)}$ is isomorphic to the action of
$\slin(2,\CCC)$ considered in the classical invariant theory of binary forms.
The assumption $K=\CCC$ is made because the classical invariant theory is 
studied under this assumption.

In Section \ref{sec:hyper}, we make a brief comment on the relation between
the classical invariant of binary forms and the hyperdeterminant.
And state a necessary condition that an
$\slin(n,\CCC)\times\slin(n,\CCC)\times\slin(2,\CCC)$-invariant is the
hyperdeterminant.
See Corollary \ref{cor:nec lm}.

The author would like to express his hearty thanks to
Professor Mitsuyasu Hashimoto who kindly informed the author that the action
of $\slin(2,K)$ on $K[T]^{\slin(n,K)\times \slin(n,K)}$ is identical with
the action of classical invarinat theory of binary forms.

\section{Preliminaries}
\mylabel{sec:prelim}

In this paper, all rings and algebras are commutative with identity element
and $K$ will always denote an infinite field.
The characteristic of $K$ is arbitrary except specified.

\begin{notation}
\begin{enumerate}
\item
Let $R$ be a ring and $\{a_{i_1 i_2 \cdots i_d}\}_{1\leq i_j\leq m_j}$
be a family of elements in $R$.
A $d$-dimensional array datum $T=(a_{i_1 i_2 \cdots i_d})_{1\leq i_j\leq m_j}$
is called a $d$-tensor or an $m_1\times\cdots\times m_d$-tensor
and each $a_{i_1 i_2 \cdots i_d}$ is called an entry of $T$.
\item
Let $M$ be an $m\times n$ matrix with entries in a ring $R$.
We denote by $\Gamma(M)$ the set of $m$-minors of $M$
(which may be empty) and by $\Gamma'(M)$ the set of 
$n$-minors of $M$.
\item
For a subring $S$ of $R$ and a tensor or a matrix $T$,
the subring generated by $S$ and the entries of $T$ 
is denoted by $S[T]$.
\item
A matrix whose entries are independent indeterminates is
called a matrix of indeterminates.
A tensor of indeterminates is defined similarly.
\item
We denote by $\lm(f)$ the leading monomial of a polynomial $f$ in
a polynomial ring with monomial order.
\item
We denote the transposed matrix of a matrix $M$ by $M\transpose$.
\item
We denote the cardinality of a set $\SSS$ by $|\SSS|$.
\item
For a vector space $V$ over $K$, we denote by $\sym(V)$ the symmetric algebra
of $V$ over $K$.
\item
For a monomial $g=X_1^{a_1}X_2^{a_2}\cdots X_u^{a_u}$ with 
indeterminates $X_1$, \ldots, $X_u$, we set $\supp(g)\define\{X_k\mid a_k>0\}$.
\item
For a positive integer $n$, we set $[n]\define\{1$, $2$, \ldots, $n\}$.
\end{enumerate}
\end{notation}

\begin{definition}\rm
Let $m$ and $n$ be positive integers.
We define
$$
\Gamma(m\times n)\define\{
[a_1, a_2,\ldots, a_m]\mid
1\leq a_1<a_2<\cdots<a_m\leq n,
a_i\in\ZZZ\}
$$
and
$$
\Gamma'(m\times n)\define\{
\langle a_1, a_2,\ldots, a_n\rangle\mid
1\leq a_1<a_2<\cdots<a_n\leq m,
a_i\in\ZZZ\}.
$$
And we define the order on $\Gamma(m\times n)$ by
$$
[a_1,\ldots, a_m]\leq [b_1,\ldots, b_m]\definebycond a_i\leq b_i\mbox{ for }
1\leq i\leq m.
$$
The order on $\Gamma'(m\times n)$ is defined similarly.
\end{definition}

We use the terminology and basic facts on 
algebras with straightening law (ASL for short) freely.
(See \cite{bv}, \cite{bh} or \cite{dep2}. Note that in
\cite{bh} and \cite{dep2}, an ASL is called an ordinal
Hodge algebra.)

\begin{definition}\rm
For an $m\times n$ matrix $M=(m_{ij})$ with entries in a
ring $R$ and $[a_1$, \ldots, $a_m]\in\Gamma(m\times n)$,
we set
$$
[a_1,\ldots, a_m]_M\define \det(m_{i a_j})
$$
and for a standard monomial 
$\mu=\prod_{k=1}^l[a_{k1}$, \ldots, $a_{km}]$ on $\Gamma(m\times n)$,
we set
$$
\mu_M\define
\prod_{k=1}^l[a_{k1}, \ldots, a_{km}]_M.
$$
$\langle b_1$, \ldots, $b_n\rangle_M$ 
and $\nu_M$ for
$\langle b_1$, \ldots, $b_n\rangle\in\Gamma'(m\times n)$ and a standard
monomial on $\Gamma'(m\times n)$ is defined similarly.
\end{definition}

Now let $X=(X_{ij})$ be an $m\times n$ matrix of indeterminates.
First we recall the following:

\begin{fact}
$K[\Gamma(X)]$ is an ASL on $\Gamma(m\times n)$ over $K$
by the embedding $[a_1$, \ldots, $a_m]\mapsto[a_1$, \ldots, $a_m]_X$.
\end{fact}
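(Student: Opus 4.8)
The plan is to check the two defining properties of an algebra with straightening law on the poset $\Gamma(m\times n)$ directly: that the standard monomials in the maximal minors form a $K$-basis of $K[\Gamma(X)]$, and that the product of two incomparable minors admits a straightening relation. If $m>n$ the poset $\Gamma(m\times n)$ is empty, $K[\Gamma(X)]=K$, and there is nothing to prove, so I assume $m\le n$ and give $K[\Gamma(X)]$ the grading in which every $[a_1,\ldots,a_m]_X$ is homogeneous of degree $1$.

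For the straightening relations I would start from the classical Pl\"ucker relations among the maximal minors of $X$. Given incomparable $[a_1,\ldots,a_m]$ and $[b_1,\ldots,b_m]$ in $\Gamma(m\times n)$, there is an index $s$ with $a_s>b_s$, and expanding the determinantal identity obtained by antisymmetrizing the $(m+1)$ column indices $b_1,\ldots,b_s,a_s,\ldots,a_m$ expresses $[a_1,\ldots,a_m]_X\,[b_1,\ldots,b_m]_X$ as a $K$-linear combination of products $[c]_X[d]_X$ of two maximal minors in which, after reordering each pair so that $[c]\le[d]$, one has $[c]$ strictly smaller than $[a_1,\ldots,a_m]$ in the lexicographic order, $[c]\le[a_1,\ldots,a_m]$ and $[c]\le[b_1,\ldots,b_m]$ in the poset order of $\Gamma(m\times n)$, and symmetrically $[a_1,\ldots,a_m]\le[d]$, $[b_1,\ldots,b_m]\le[d]$. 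Iterating this rewriting on whichever adjacent factors of a degree-$k$ monomial are still out of order, and using that the lexicographic order on the sorted sequence of index sets of a degree-$k$ monomial is a well-order, the process terminates; this simultaneously shows that the standard monomials span $K[\Gamma(X)]$ and that the straightening axiom holds with the domination conditions required of an ASL.

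Linear independence of the standard monomials is where the matrix of indeterminates is used. I would fix a diagonal term order on $K[X]$, so that $\lm([a_1,\ldots,a_m]_X)=X_{1,a_1}X_{2,a_2}\cdots X_{m,a_m}$ and the leading monomial of a product of minors is the product of their leading monomials, and then verify that the map sending a standard monomial $\prod_{t=1}^k[a_{t1},\ldots,a_{tm}]$ to $\prod_{t=1}^k X_{1,a_{t1}}X_{2,a_{t2}}\cdots X_{m,a_{tm}}\in K[X]$ is injective: by the standardness chain $[a_{11},\ldots,a_{1m}]\le\cdots\le[a_{k1},\ldots,a_{km}]$, the exponent vector of the variables in row $i$ records the multiset $\{a_{1i},\ldots,a_{ki}\}$, and these $m$ multisets together with the chain condition reconstruct the factors uniquely. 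Hence distinct standard monomials have distinct leading monomials in $K[X]$ and are therefore $K$-linearly independent; combined with the spanning statement this gives the basis property. The argument uses no assumption on $\chara K$.

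The main obstacle will be the bookkeeping in the second step: arranging the Pl\"ucker relation in a normal form for which both the termination (a strict decrease in the chosen well-order at each step) and the ASL domination conditions (every term keeps a factor below, and a factor above, both of the original minors) are visibly inherited under iteration. Once the straightening algorithm is organized so that these properties propagate, the remaining verifications are routine; alternatively one may simply invoke the treatments of the homogeneous coordinate ring of the Grassmannian in \cite{bv}, \cite{bh}, or \cite{dep2}.
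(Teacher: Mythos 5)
The paper itself gives no proof of this Fact: it is invoked as a classical result, with the reader referred to \cite{bv}, \cite{bh} and \cite{dep2}, and your outline reproduces exactly the standard argument found in those sources --- straightening of incomparable maximal minors via the Pl\"ucker (Garnir) relations together with linear independence of standard monomials through their distinct leading monomials under a diagonal order, the latter being precisely the content of the paper's Lemma \ref{lem:stu 3.1.8} quoted from \cite{stu}. So your proposal is correct and takes essentially the same route as the treatment the paper relies on; the bookkeeping you flag (termination and propagation of the domination conditions in the straightening step) is exactly what those references carry out in detail.
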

Suppose that a diagonal monomial order is defined on $K[X]$.
Then the leading monomial of $[a_1$, \ldots, $a_m]_X$
is $X_{1 a_1}X_{2 a_2} \cdots X_{m a_m}$.
Moreover, the following result holds.

\begin{lemma}[{c.f.\ \cite[Lemma 3.1.8]{stu}}]
\mylabel{lem:stu 3.1.8}
If $\mu$ and $\mu'$ are standard monomials on $\Gamma(m\times n)$
with $\mu\neq\mu'$, then $\lm(\mu_X)\neq\lm(\mu'_X)$.
In particular, if
$$
f=\sum_{k} c_k\mu_k
$$
is the standard representation of $f\in K[\Gamma(X)]$,
there is a unique $k$ such that $\lm(f)=\lm((\mu_k)_X)$.
\end{lemma}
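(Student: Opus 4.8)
The plan is to prove that $\mu\mapsto\lm(\mu_X)$ is injective on standard monomials; the ``in particular'' clause then drops out immediately. Given a standard monomial $\mu=\prod_{k=1}^{l}[a_{k1},\dots,a_{km}]$, so that $[a_{11},\dots,a_{1m}]\leq[a_{21},\dots,a_{2m}]\leq\cdots\leq[a_{l1},\dots,a_{lm}]$ in $\Gamma(m\times n)$, I would first use that the leading term of a product is the product of the leading terms, together with the identity $\lm([a_1,\dots,a_m]_X)=X_{1a_1}X_{2a_2}\cdots X_{ma_m}$ recalled just above, to get
$$
\lm(\mu_X)=\prod_{k=1}^{l}\prod_{i=1}^{m}X_{i,a_{ki}}=\prod_{i=1}^{m}\Bigl(\prod_{k=1}^{l}X_{i,a_{ki}}\Bigr).
$$
Thus the exponent of $X_{ij}$ in $\lm(\mu_X)$ equals $\#\{k\mid a_{ki}=j\}$, so $\lm(\mu_X)$ already determines the number $l$ of factors (namely $\deg\lm(\mu_X)/m$) and, for each fixed $i\in[m]$, the multiset $\{a_{1i},\dots,a_{li}\}$.

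The step that carries the actual content — and the only place where the hypothesis ``standard'' is used — is that this much data reconstructs the full tableau $(a_{ki})$. The chain condition on $\mu$ is exactly the statement that $a_{1i}\leq a_{2i}\leq\cdots\leq a_{li}$ for every $i$, i.e.\ each column of the tableau $(a_{ki})$ is weakly increasing; hence $(a_{1i},\dots,a_{li})$ is forced to be the nondecreasing rearrangement of the multiset read off from $\lm(\mu_X)$, so every entry $a_{ki}$, and therefore $\mu$ itself, is recovered from $\lm(\mu_X)$. This gives $\lm(\mu_X)\neq\lm(\mu'_X)$ whenever $\mu\neq\mu'$. I do not anticipate a serious difficulty here; the one thing to watch is to compare total degrees first, so that one is reduced to the case where $\mu$ and $\mu'$ have the same number of factors before running the column-by-column comparison.

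For the last assertion, I would take the standard representation $f=\sum_k c_k\mu_k$ of $f\in K[\Gamma(X)]$, where the $\mu_k$ are pairwise distinct standard monomials and each $c_k\neq0$, and view it as $f=\sum_k c_k(\mu_k)_X$. By the injectivity just established the monomials $\lm((\mu_k)_X)$ are pairwise distinct, so exactly one index $k_0$ makes $\lm((\mu_{k_0})_X)$ largest among them; since the top monomials of the summands are distinct, nothing cancels in degree $\lm((\mu_{k_0})_X)$, its coefficient in $f$ is $c_{k_0}\neq0$, and hence $\lm(f)=\lm((\mu_{k_0})_X)$ with $k_0$ unique.
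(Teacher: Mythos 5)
Your proof is correct, and it is essentially the canonical argument: the paper itself gives no proof of this lemma but simply cites Sturmfels (Lemma 3.1.8), where the same reconstruction idea is used — the exponents of the $X_{ij}$ in $\lm(\mu_X)$ record, for each row $i$, the multiset of column indices, and the chain (standardness) condition forces each such multiset to appear in weakly increasing order, so the tableau and hence $\mu$ is recovered. Your handling of the ``in particular'' clause (distinct leading monomials, so the unique maximal one cannot be cancelled by strictly smaller terms) is also the standard and correct deduction.
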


We define the action of $\slin(m,K)$ on $K[X]$ by 
sending $X_{ij}$ to the $(i,j)$-entry of $P\transpose X$
for any $i$ and $j$, where $P$ is an element of 
$\slin(m,K)$.
Note that this action coincides with the action on
$\sym(K^m\otimes K^n)$ induced by the natural action
of $\slin(m,K)$ on $K^m$ under the natural isomorphism
$\sym(K^m\otimes K^n)\simeq K[X]$ such that 
$X_{ij}\leftrightarrow\eee_i^{(1)}\otimes \eee_j^{(2)}$,
where $\eee_1^{(1)}$, \ldots, $\eee_m^{(1)}$ and
$\eee_1^{(2)}$, \ldots, $\eee_n^{(2)}$ are the canonical bases of
$K^m$ and $K^n$ respectively.

Under this action, the following fact is known.

\begin{thm}[{see e.g. \cite[(7.4) Proposition and (7.7) Corollary]{bv}}]
\mylabel{thm:inv sl}
$K[X]^{\slin(m,K)}=K[\Gamma(X)]$.
\end{thm}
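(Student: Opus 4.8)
The plan is to prove the two inclusions separately. The inclusion $K[\Gamma(X)]\subseteq K[X]^{\slin(m,K)}$ is immediate: for $P\in\slin(m,K)$ the action sends $[a_1,\ldots,a_m]_X$ to $[a_1,\ldots,a_m]_{P\transpose X}=\det(P)[a_1,\ldots,a_m]_X=[a_1,\ldots,a_m]_X$, because the $m$-minor uses all $m$ rows, so the relevant column block of $P\transpose X$ is $P\transpose$ times the column block of $X$. So the content is the reverse inclusion. If $m>n$ there are no $m$-minors, so $K[\Gamma(X)]=K$, and one checks directly that $K[X]^{\slin(m,K)}=K$: every $m\times n$ matrix of column rank $n$ lies in the $\slin(m,K)$-orbit of $\begin{pmatrix}I_n\\0\end{pmatrix}$ (extend its columns to a basis of $K^m$ and rescale the last added vector so the change-of-basis matrix has determinant $1$), and the full-rank locus is Zariski dense since $K$ is infinite, so an invariant polynomial is constant. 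Assume henceforth $m\le n$.

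Put $\delta\define[1,\ldots,m]_X$. On $K[X]_\delta$ I would perform the ``big cell'' coordinate change: let $B$ be the $m\times m$ submatrix of $X$ on the first $m$ columns (so $\det B=\delta$), $X'$ the submatrix on the remaining columns, and $Y'\define B^{-1}X'$, so that $X=B\cdot[\,I_m\mid Y'\,]$. This gives an $\slin(m,K)$-equivariant $K$-algebra isomorphism $K[X]_\delta\cong K[B,Y',\det(B)^{-1}]$ under which $\slin(m,K)$ fixes every entry of $Y'$ and acts on $B$ by $B\mapsto P\transpose B$. Since every $B\in\glin(m,K)$ lies in the $\slin(m,K)$-orbit of $\diag(1,\ldots,1,\det B)$ --- take $P\transpose=\diag(1,\ldots,1,\det B)B^{-1}$, which has determinant $1$ --- an $\slin(m,K)$-invariant element of $K[B,\det(B)^{-1}]$ is determined by its restriction to that orbit, hence is a Laurent polynomial in $\det B$; and as $K$ is infinite, a polynomial in the entries of $B$ and in $\det(B)^{-1}$ vanishing on all of $\glin(m,K)$ is zero, so any such invariant already lies in $K[\delta,\delta^{-1}]$. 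Therefore $(K[X]_\delta)^{\slin(m,K)}=K[Y',\delta,\delta^{-1}]$. On the other hand, expanding the $m$-minors of $X=B\cdot[\,I_m\mid Y'\,]$ shows that each of them is $\pm\delta$ times a minor of $Y'$, and conversely every minor of $Y'$ --- in particular every entry of $Y'$ --- arises this way; hence $K[\Gamma(X)]_\delta=K[Y',\delta,\delta^{-1}]$ as well, and thus $(K[X]_\delta)^{\slin(m,K)}=K[\Gamma(X)]_\delta$.

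It remains to descend this equality to $K[X]$, and this is where I expect the main difficulty. Concretely I would prove the saturation statement $K[\Gamma(X)]_\delta\cap K[X]=K[\Gamma(X)]$ (intersection inside $K[X]_\delta$); granting it, $K[X]^{\slin(m,K)}\subseteq(K[X]_\delta)^{\slin(m,K)}\cap K[X]=K[\Gamma(X)]_\delta\cap K[X]=K[\Gamma(X)]$, which finishes the proof. By an easy induction (using that $K[X]$ is a domain) the saturation reduces to $K[\Gamma(X)]\cap\delta K[X]=\delta K[\Gamma(X)]$, i.e. to the injectivity of the natural map $K[\Gamma(X)]/\delta K[\Gamma(X)]\to K[X]/\delta K[X]$. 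Here I would use the ASL structure together with the diagonal term order and Lemma \ref{lem:stu 3.1.8}. Since $[1,\ldots,m]$ is the minimum of the poset $\Gamma(m\times n)$, multiplying a standard monomial by it again yields a standard monomial, so $\delta K[\Gamma(X)]$ is the $K$-span of the $\mu_X$ with $[1,\ldots,m]\mid\mu$; hence $K[\Gamma(X)]/\delta K[\Gamma(X)]$ is spanned by the classes of the $\mu_X$ with $\mu$ a standard monomial not involving $[1,\ldots,m]$. For such a $\mu$, no factor $[a_1,\ldots,a_m]$ has $a_m=m$ (that would force $[a_1,\ldots,a_m]=[1,\ldots,m]$, as $a_m=m$ together with $a_1<\cdots<a_m$ forces $a_i=i$ for all $i$), so $X_{mm}$ divides none of the leading monomials $\lm([a_1,\ldots,a_m]_X)=X_{1a_1}\cdots X_{ma_m}$ of the factors, whence $\lm(\mu_X)$ is not divisible by $\lm(\delta)=X_{11}X_{22}\cdots X_{mm}$. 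On the other hand, every nonzero $h\in\delta K[X]$ satisfies $\lm(h)=\lm(\delta)\,\lm(h/\delta)$, which is divisible by $\lm(\delta)$. So no nonzero $K$-linear combination $f$ of such $\mu_X$'s can lie in $\delta K[X]$: by Lemma \ref{lem:stu 3.1.8} the standard representation of $f$ has $\lm(f)=\lm(\mu_X)$ for one of the occurring $\mu$'s, which is not divisible by $\lm(\delta)$. This gives the injectivity, hence the saturation, and the theorem follows.
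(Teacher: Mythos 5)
Your argument is correct. Note, however, that the paper does not prove this statement at all: it is quoted as a known result, with a pointer to Bruns--Vetter, \emph{Determinantal Rings}, (7.4) Proposition and (7.7) Corollary, so there is no internal proof to compare against; what you have written is essentially the classical proof that the cited source formalizes. Your route --- the easy inclusion via $\det P=1$; the separate treatment of $m>n$ by a dense-orbit argument; localization at $\delta=[1,\ldots,m]_X$ and the ``big cell'' change of coordinates $X=B[\,I_m\mid Y'\,]$, on which $\slin(m,K)$ fixes $Y'$ and acts on $B$ so that every $B$ is in the orbit of $\diag(1,\ldots,1,\det B)$; identification of $K[\Gamma(X)]_\delta$ with $K[Y',\delta,\delta^{-1}]$ via minors of $[\,I_m\mid Y'\,]$; and finally descent through the saturation $K[\Gamma(X)]_\delta\cap K[X]=K[\Gamma(X)]$, proved by the leading-monomial argument with the ASL structure and Lemma \ref{lem:stu 3.1.8} --- is sound, and it has the virtue of using only facts already stated in the paper (Fact 2.3 and Lemma \ref{lem:stu 3.1.8}) plus density arguments that need only $K$ infinite, so it is characteristic-free, exactly as the paper requires (no Reynolds operator or reductivity is invoked). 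Two small steps are glossed but harmless: you should say explicitly that the entries of $B$ and $Y'$ are algebraically independent (i.e.\ that $(B,Y')\mapsto[\,B\mid BY'\,]$ is an isomorphism onto the locus $\delta\neq 0$), and the passage from invariants of $K[B,\det(B)^{-1}]$ to invariants of $K[B,Y',\det(B)^{-1}]$ deserves one sentence (write an invariant as a sum over monomials in the $Y'$-entries with coefficients in $K[B,\det(B)^{-1}]$ and observe each coefficient is invariant, or run your evaluation-at-$\diag(1,\ldots,1,\det B)$ argument with the $Y'$-variables carried along). With those remarks added, the proof stands as a complete substitute for the citation.
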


\section{The ring of $\slin(m,K)\times \slin(n,K)$-invariants}
\mylabel{sec:sl sl}

In this section, we consider the action of $\slin(m,\KKK)\times \slin(n,\KKK)$ on 
$\KKK[T]=\KKK[T_{ijk}\mid 1\leq i\leq m$, $1\leq j\leq n$, $1\leq k\leq 2]$,
where $T=(T_{ijk})$ is an $m\times n\times 2$-tensor of indeterminates
and study the ring of indeterminates under this action.
Set $X=(T_{ij1})$ and $Y=(T_{ij2})$.
Also set $G=\slin(m,\KKK)\times \slin(n,\KKK)$.
We define the action of $G$ by
$$
(P,Q)\cdot X=P\transpose XQ\quad\mbox{and}\quad
(P,Q)\cdot Y=P\transpose YQ
$$
for $(P,Q)\in G$.
Note that this action coincides with the action on
$\sym(K^m\otimes K^n\otimes K^2)$ induced by the natural actions
of $\slin(m,K)$ on $K^m$ and $\slin(n,K)$ on $K^n$ under the natural isomorphism
$\sym(K^m\otimes K^n\otimes K^2)\simeq K[T]$ with 
$T_{ijk}\leftrightarrow\eee_i^{(1)}\otimes \eee_j^{(2)}\otimes \eee_k^{(3)}$,
where $\eee_1^{(1)}$, \ldots, $\eee_m^{(1)}$;
$\eee_1^{(2)}$, \ldots, $\eee_n^{(2)}$ and 
$\eee_1^{(3)}$, $\eee_2^{(3)}$ are the canonical bases of
$K^m$, $K^n$ and $K^2$ respectively.

By Theorem \ref{thm:inv sl} and symmetry, it is clear that the 
following fact holds.

\begin{lemma}
\mylabel{lem:kgammagammaprime}
$K[T]^{G}=\kgammagammaprime$.
\end{lemma}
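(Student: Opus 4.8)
The statement to prove is Lemma~\ref{lem:kgammagammaprime}, namely
$K[T]^{G}=\kgammagammaprime$, where $X=(T_{ij1})$, $Y=(T_{ij2})$ and
$G=\slin(m,K)\times\slin(n,K)$ acts by $(P,Q)\cdot X=P\transpose XQ$ and
$(P,Q)\cdot Y=P\transpose YQ$. The idea is to split the $G$-action into the
two factor actions and invoke Theorem~\ref{thm:inv sl} twice. First I would
observe that $K[T]^{G}=\bigl(K[T]^{\slin(m,K)\times\{1\}}\bigr)^{\{1\}\times\slin(n,K)}$,
so it suffices to compute the invariants under each factor separately and then
intersect.

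\textbf{Step 1: the $\slin(m,K)$-factor.} The subgroup $\slin(m,K)\times\{1\}$
acts on $K[T]=K[X,Y]$ by $P\cdot X=P\transpose X$, $P\cdot Y=P\transpose Y$;
equivalently it acts on the $m\times 2n$ matrix of indeterminates
$\begin{pmatrix}X & Y\end{pmatrix}$ by left multiplication by $P\transpose$.
By Theorem~\ref{thm:inv sl} applied to this $m\times 2n$ matrix of
indeterminates, $K[T]^{\slin(m,K)\times\{1\}}=K[\Gamma(X,Y)]$, the subalgebra
generated by all $m$-minors of $\begin{pmatrix}X & Y\end{pmatrix}$. (Here I am
using the notation $K[\Gamma(X,Y)]$ already set up in the definition of the
macro \verb|\kgammagammaprime|, i.e. the first-Veronese-style algebra of
maximal minors of the concatenated matrix.) Symmetrically, in Step~2 the
subgroup $\{1\}\times\slin(n,K)$ acts on the $2m\times n$ matrix
$\begin{pmatrix}X\\ Y\end{pmatrix}$ by right multiplication by $Q$, so by the
transposed version of Theorem~\ref{thm:inv sl},
$K[T]^{\{1\}\times\slin(n,K)}=K[\Gamma'\begin{pmatrix}X\\ Y\end{pmatrix}]$, the
subalgebra generated by all $n$-minors of $\begin{pmatrix}X\\ Y\end{pmatrix}$.

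\textbf{Step 2: intersect.} Since the two factors of $G$ commute, an element of
$K[T]$ is $G$-invariant if and only if it is invariant under each factor, hence
$K[T]^{G}=K[\Gamma(X,Y)]\cap K[\Gamma'\begin{pmatrix}X\\ Y\end{pmatrix}]
=\kgammagammaprime$, which is exactly the asserted equality. The only point
that needs a word of care is the direction $K[T]^{G}\subseteq K[T]^{\slin(m,K)\times\{1\}}\cap K[T]^{\{1\}\times\slin(n,K)}$
together with its converse; both are immediate from the definition of the
product-group action, since $G$ is generated by its two factor subgroups.

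\textbf{Main obstacle.} There is essentially no obstacle: the content is
entirely in Theorem~\ref{thm:inv sl}, and the lemma is the bookkeeping
observation that the $\slin(m)\times\slin(n)$-invariants are the intersection
of the single-factor invariant rings, each of which is a ring of maximal (resp.
minimal) minors of a matrix of indeterminates. The only thing to be slightly
careful about is matching the left/right conventions: the $\slin(m)$-action is
by $P\transpose(\,\cdot\,)$ on rows of $\begin{pmatrix}X & Y\end{pmatrix}$,
which is the hypothesis of Theorem~\ref{thm:inv sl}, and the $\slin(n)$-action
is by $(\,\cdot\,)Q$ on columns of $\begin{pmatrix}X\\ Y\end{pmatrix}$, which is
the transpose of that hypothesis — this is the ``symmetry'' the statement
alludes to. That is why the statement is flagged as ``clear.''
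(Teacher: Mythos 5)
Your proposal is correct and is exactly the argument the paper intends: its one-line proof ("By Theorem \ref{thm:inv sl} and symmetry") amounts to applying Theorem \ref{thm:inv sl} to the $m\times 2n$ matrix $(X\ Y)$ for the $\slin(m,K)$-factor, the transposed version to $\begin{pmatrix}X\\ Y\end{pmatrix}$ for the $\slin(n,K)$-factor, and intersecting the two invariant rings. Nothing in your write-up deviates from or adds a gap to that reasoning.
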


By symmetry, we may assume that $m\leq n$.
If $n>2m$, then $
K[\Gamma'\begin{pmatrix}X\\ Y\end{pmatrix}]=K$,
so in the following, we also assume that $n\leq 2m$.

We introduce the degree lexicographic monomial order on $K[T]$
with
$T_{111}>T_{211}>T_{311}>\cdots>T_{m11}>T_{121}>\cdots>T_{m21}>T_{131}>
\cdots>T_{mn1}>T_{112}>T_{212}>\cdots>T_{m12}>T_{122}>\cdots>T_{mn2}$.
And we set
$Z=(X,Y)=(Z_{ij})$ and $W=\begin{pmatrix}X\\ Y\end{pmatrix}=(W_{ij})$.
Note the monomial order we defined is diagonal both for $Z$ and $W$.
So we see the following:

\begin{lemma}
\mylabel{lem:lm one elem}
For an element $[a_1$, \ldots, $a_m]\in\Gamma(m\times 2n)$,
$\lm([a_1$, \ldots, $a_m]_Z)=Z_{1a_1}Z_{2a_2}\cdots Z_{m a_m}$
and for $\langle b_1$, \ldots, $b_n\rangle\in\Gamma'(2m\times n)$,
$\lm(\langle b_1$, \ldots, $b_n\rangle_W)=W_{b_11}W_{b_22}\cdots W_{b_nn}$.
\end{lemma}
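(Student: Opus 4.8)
The two displayed formulas are symmetric in shape: each asserts that, for the generic matrix in question ($Z$ of size $m\times 2n$, $W$ of size $2m\times n$), the leading monomial of a \emph{maximal} minor is the product of its main-diagonal entries; equivalently, that the chosen order on $K[T]$, read off on the subsets of variables forming $K[Z]$ and $K[W]$, is a \emph{diagonal} monomial order. Once that is known, the two formulas are exactly the consequence of diagonality recorded in the discussion preceding Lemma \ref{lem:stu 3.1.8} (applied to $Z$ in place of $X$, and to $W$ with rows and columns interchanged). So the whole substance of the lemma is the assertion ``the monomial order we defined is diagonal both for $Z$ and for $W$'', and the plan is to justify precisely that.

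The plan is to reduce diagonality to a statement about $2\times 2$ minors. First I would record the standard criterion: if $M=(M_{pq})$ is a generic matrix and a monomial order on $K[M]$ satisfies $M_{ij}M_{kl}>M_{il}M_{kj}$ whenever $i<k$ and $j<l$, then $\lm$ of every maximal minor of $M$ equals its main-diagonal product. Its proof is a one-line bubble-sort argument: write $\det(M_{r_i,c_j})_{1\le i,j\le t}$ (with $r_1<\cdots<r_t$ and $c_1<\cdots<c_t$) as $\sum_{\sigma}\mathrm{sgn}(\sigma)\prod_i M_{r_i,c_{\sigma(i)}}$; the $t!$ monomials are pairwise distinct (the multiset of index pairs recovers $\sigma$), hence there is no cancellation and $\lm$ is the largest of them; and if $\sigma\neq\mathrm{id}$, picking $i$ with $\sigma(i)>\sigma(i+1)$ and replacing $\sigma$ by $\sigma\circ(i,i+1)$ amounts to swapping two factors, which by the hypothesis (applied to rows $r_i<r_{i+1}$ and columns $c_{\sigma(i+1)}<c_{\sigma(i)}$) strictly increases the monomial while decreasing the number of inversions — iterating reaches $\mathrm{id}$, so $\mathrm{id}$ gives the maximum.

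Next I would verify the $2\times 2$ criterion for $Z$ and for $W$. Under $T_{ij1}\leftrightarrow Z_{ij}$ and $T_{ij2}\leftrightarrow Z_{i,n+j}$, the order induced on $K[Z]$ is degree lexicographic with the column-major variable ordering $Z_{pq}>Z_{p'q'}$ iff $q<q'$, or $q=q'$ and $p<p'$; for $i<k$ and $j<l$ the four relevant variables rank $Z_{ij}>Z_{kj}>Z_{il}>Z_{kl}$, and since both sides have degree $2$ and $Z_{ij}$ occurs in $Z_{ij}Z_{kl}$ but not in $Z_{il}Z_{kj}$, the degree-lex comparison gives $Z_{ij}Z_{kl}>Z_{il}Z_{kj}$. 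For $W$, with $W_{ij}=T_{ij1}$ for $i\le m$ and $W_{ij}=T_{i-m,j,2}$ for $i>m$, the induced order on $K[W]$ is degree lex with the ``block column-major'' variable ordering in which rows $1,\dots,m$ (taken column-major) all precede rows $m+1,\dots,2m$ (taken column-major). The point to note is that within any single column one still has $W_{pq}>W_{p'q}$ whenever $p<p'$, irrespective of the block split. Hence for $i<k$, $j<l$: if $i$ and $k$ lie in the same block the four variables rank $W_{ij}>W_{kj}>W_{il}>W_{kl}$, while if $i\le m<k$ they rank $W_{ij}>W_{il}>W_{kj}>W_{kl}$; in both cases $W_{ij}$ occurs in $W_{ij}W_{kl}$ and not in $W_{il}W_{kj}$, so again $W_{ij}W_{kl}>W_{il}W_{kj}$. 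Applying the criterion to the $m$-minors of $Z$ and the $n$-minors of $W$ yields the two claimed leading monomials.

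The only genuinely delicate step is the $W$ case, precisely because the order on $K[W]$ is \emph{not} literally column-major: the $Y$-block sits entirely below the $X$-block in the variable ranking, so when $i$ and $k$ straddle the split the naive ``smaller column $\Rightarrow$ larger variable'' heuristic fails, and one must run the short case analysis above (isolating the observation that smaller row index still means larger variable within a fixed column). Everything else — the bubble-sort lemma and the $Z$ case — is routine, and one could alternatively just invoke the general fact that diagonal monomial orders exist and that the degree lexicographic order with this variable ordering is one of them, citing \cite{bv} or \cite{stu}.
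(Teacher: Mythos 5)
Your proposal is correct and takes essentially the same route as the paper: the paper's entire argument is the remark that the chosen degree lexicographic order is diagonal both for $Z$ and for $W$, from which the lemma follows by the standard fact about leading monomials of minors under a diagonal order, which is exactly your reduction. The only difference is one of detail --- you verify the diagonality explicitly via the $2\times 2$ exchange criterion, the bubble-sort argument, and the block-straddling case analysis for $W$, whereas the paper simply asserts it and leaves that (correct) verification to the reader.
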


Let $f$ be a non-zero element of $\kgammagammaprime$. 
Since $f\in K[\Gamma(X,Y)]$, by Lemmas \ref{lem:stu 3.1.8} and 
\ref{lem:lm one elem}, we see that
$\supp(\lm(f))\cap\{Z_{ij}\mid j<i$ or $2n-j<m-i\}=\emptyset$.
In other words,
$$
\supp(\lm(f))\cap(\{T_{ij1}\mid j<i\}\cup\{T_{ij2} \mid n-j<m-i\})=\emptyset.
$$
By symmetry, we also see that
$$
\supp(\lm(f))\cap(\{T_{ij1}\mid i<j\}\cup\{T_{ij2} \mid m-i<n-j\})=\emptyset.
$$
Therefore, we see the following fact:

\begin{lemma}
\mylabel{lem:lm in}
Let $f$ be a non-zero element of $\kgammagammaprime$.
Then
$$
\supp(\lm(f))\subset\{T_{111}, T_{221}, \ldots, T_{mm1}, T_{1, n-m+1,2}, T_{2,n-m+2,2}, \ldots, T_{mn2}\}.
$$
In other words,
$\supp(\lm(f))\subset\{Z_{ij}\mid i=j$ or $m-i=2n-j\}$ and
$\supp(\lm(f))\subset\{W_{ij}\mid i=j\leq m$ or $2m-i=n-j< m\}$.
\end{lemma}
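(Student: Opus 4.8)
The plan is to obtain this purely by combining the two displayed containments just derived above the statement, with no new analytic input. First I would look at the first-layer indeterminates $T_{ij1}$. The first displayed consequence excludes those with $j<i$, and the second excludes those with $i<j$; together these force every $T_{ij1}\in\supp(\lm(f))$ to satisfy $i=j$. Since $T_{ij1}$ is an entry of $T$ we have $1\leq i\leq m$ and $1\leq j\leq n$, and because $m\leq n$ the surviving indeterminates are precisely $T_{111},T_{221},\ldots,T_{mm1}$.

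Next I would do the same for the second-layer indeterminates $T_{ij2}$. The first displayed consequence excludes those with $n-j<m-i$ and the second those with $m-i<n-j$; together they force $n-j=m-i$, i.e.\ $j=n-m+i$, on every $T_{ij2}\in\supp(\lm(f))$. Letting $i$ run over $1,\ldots,m$ (the admissible range of the first index) gives exactly $T_{1,n-m+1,2},T_{2,n-m+2,2},\ldots,T_{mn2}$; note $1\leq n-m+1$ because $m\leq n$, so these column indices are legitimate. Combining the two cases yields the displayed inclusion.

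Finally, for the two ``in other words'' reformulations I would simply translate the index conditions through the identifications $Z=(X,Y)$ and $W=\begin{pmatrix}X\\ Y\end{pmatrix}$: under $T_{ij1}=Z_{ij}$ and $T_{ij2}=Z_{i,n+j}$ the condition ``$i=j$ or $j=n-m+i$'' on the $T$-indices becomes ``$i=j$ or $m-i=2n-j$'' on the $Z$-indices, while under $T_{ij1}=W_{ij}$ and $T_{ij2}=W_{m+i,j}$ it becomes ``$i=j\leq m$ or $2m-i=n-j<m$'' on the $W$-indices. There is no genuine obstacle here; the only thing that needs care is keeping the three index systems and their ranges mutually consistent, since all the substance is already contained in Lemmas \ref{lem:stu 3.1.8} and \ref{lem:lm one elem} and the two displayed consequences preceding the statement.
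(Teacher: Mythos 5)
Your proposal is correct and follows essentially the same route as the paper, which derives the lemma by intersecting the two displayed exclusion sets (coming from Lemmas \ref{lem:stu 3.1.8} and \ref{lem:lm one elem} applied to the $\Gamma(m\times 2n)$ and, by symmetry, the $\Gamma'(2m\times n)$ structures) and then rewriting the resulting conditions $i=j$ and $n-j=m-i$ in the $Z$- and $W$-coordinates. Your index translations and range checks (using $m\leq n$) match what the paper leaves implicit.
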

By Lemmas \ref{lem:stu 3.1.8}, \ref{lem:lm one elem} and \ref{lem:lm in} we see the following:

\begin{lemma}
\mylabel{lem:lm f}
Let $f$ be a non-zero element of $\kgammagammaprime$.
Also let
$$
f=\sum_{k}c_k\mu_k \quad\mbox{and}\quad
f=\sum_{l}d_l\nu_l
$$
be the standard representations of $f$ with respect to the ASL structure of
$K[\Gamma(X,Y)]$ on $\Gamma(m\times 2n)$ and  
$K[\Gamma'\begin{pmatrix}X\\ Y\end{pmatrix}]$ on $\Gamma'(2m\times n)$ respectively.
Suppose that $\mu\define\mu_k$ is the 
standard monomial on $\Gamma(m\times 2n)$ such that $\lm(f)=\lm(\mu_Z)$ 
and  $\nu\define\nu_l$ is the standard monomial on $\Gamma'(2m\times n)$
such that $\lm(f)=\lm(\nu_W)$, then $\mu$ and $\nu$ is the following form.
$$
\mu=\prod_{t=1}^u[1,2,\ldots,i_t,2n-m+i_t+1,2n-m+i_t+2,\ldots, 2n]
$$
and
$$
\nu=\prod_{s=1}^v\langle1,2,\ldots,j_s,2m-n+j_s+1,2m-n+j_s+2,\ldots, 2m\rangle,
$$
where $i_1$, \ldots, $i_u$ and $j_1$, \ldots, $j_v$ are integers with 
$m \geq  i_1\geq \cdots\geq i_u\geq 0$ and $m\geq j_1\geq \cdots\geq j_v\leq n-m$.
And
\begin{eqnarray*}
\lm(f)&=&\prod_{t=1}^u T_{111}\cdots T_{i_t i_t 1} T_{i_t+1, n-m+i_t+1,2}\cdots T_{mn2}\\
&=&\prod_{s=1}^v T_{111}\cdots T_{j_s j_s 1} T_{m-n+j_s+1,j_s+1,2}\cdots T_{mn2}.
\end{eqnarray*}
\end{lemma}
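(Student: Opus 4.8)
The plan is to read off the combinatorial shape of the two standard monomials $\mu$ and $\nu$ directly from the already-established support constraint in Lemma \ref{lem:lm in}, using the fact (Lemma \ref{lem:lm one elem}) that the leading monomial of a single maximal minor is its main diagonal. First I would analyze $\mu$. Write $\mu=\prod_{t=1}^u[a^{(t)}_1,\ldots,a^{(t)}_m]$ as a standard monomial on $\Gamma(m\times 2n)$, ordered so that $[a^{(1)}]\leq[a^{(2)}]\leq\cdots$ in the partial order of the poset. By Lemma \ref{lem:lm one elem}, $\lm(\mu_Z)=\prod_t Z_{1 a^{(t)}_1}Z_{2 a^{(t)}_2}\cdots Z_{m a^{(t)}_m}$, and by hypothesis $\lm(\mu_Z)=\lm(f)$, so by Lemma \ref{lem:lm in} every variable $Z_{r a^{(t)}_r}$ occurring must satisfy $r=a^{(t)}_r$ or $m-r=2n-a^{(t)}_r$, i.e. $a^{(t)}_r=r$ or $a^{(t)}_r=2n-m+r$. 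Thus each row index $r$ of each factor is ``small'' (value $r$) or ``large'' (value $2n-m+r$); since the $a^{(t)}_r$ are strictly increasing in $r$ and $2n-m+r>m\geq r$, once a factor goes ``large'' at some row it stays large. Hence each factor has the form $[1,2,\ldots,i_t,2n-m+i_t+1,\ldots,2n]$ for some $0\leq i_t\leq m$. Comparability of consecutive factors in the standard monomial then forces $i_1\geq i_2\geq\cdots\geq i_u$ (a larger $i_t$ gives the coordinatewise-smaller element). The symmetric argument applied to $\nu$ on $\Gamma'(2m\times n)$ gives $\nu=\prod_s\langle 1,\ldots,j_s,2m-n+j_s+1,\ldots,2m\rangle$ with $j_1\geq\cdots\geq j_v$; the range constraint $j_s\geq n-m$ comes from the second half of Lemma \ref{lem:lm in} (the condition $2m-i=n-j<m$ translates to the large block starting above row $m-$something, forcing $j_s\ge n-m$), and $j_s\le m$ since a minor of $W$ has $n\le 2m$ rows available but the ``small'' block of row-values can have length at most... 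I would double-check this bound by writing out which $W$-variables are permitted.

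The last task is to compute $\lm(f)$ explicitly in the $T$-variables from both descriptions and check they agree. For the $\mu$-side: the factor indexed by $t$ contributes $\prod_{r=1}^{i_t}Z_{rr}\cdot\prod_{r=i_t+1}^{m}Z_{r,2n-m+r}$; translating via $Z=(X,Y)$, a column index $c\le n$ means $Z_{rc}=T_{rc1}$ and a column index $c=n+c'$ with $1\le c'\le n$ means $Z_{rc}=T_{rc'2}$, so $Z_{rr}=T_{rr1}$ and $Z_{r,2n-m+r}=T_{r,n-m+r,2}$. This yields $\lm(f)=\prod_{t=1}^u T_{111}\cdots T_{i_t i_t 1}\,T_{i_t+1,n-m+i_t+1,2}\cdots T_{mn2}$. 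For the $\nu$-side: factor $s$ contributes $\prod_{r=1}^{j_s}W_{rr}\cdot\prod_{r=j_s+1}^{n}W_{r,? }$ — here I must be careful with the $\Gamma'$ conventions, since $\nu_s=\langle b_1,\ldots,b_n\rangle$ records row-indices and $\lm(\langle b\rangle_W)=W_{b_1 1}\cdots W_{b_n n}$; with $b=(1,\ldots,j_s,2m-n+j_s+1,\ldots,2m)$ this is $\prod_{c=1}^{j_s}W_{cc}\cdot\prod_{c=j_s+1}^{n}W_{2m-n+c,c}$. Translating $W=\binom{X}{Y}$: a row index $r\le m$ gives $W_{rc}=T_{rc1}$, and $r=m+r'$ gives $W_{rc}=T_{r'c2}$, so $W_{cc}=T_{cc1}$ and $W_{2m-n+c,c}=T_{m-n+c,c,2}$ (valid since $2m-n+c>m$ when $c>n-m\ge j_s\ge$ the relevant range). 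This gives $\lm(f)=\prod_{s=1}^v T_{111}\cdots T_{j_s j_s 1}\,T_{m-n+j_s+1,j_s+1,2}\cdots T_{mn2}$, matching the claimed second expression.

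The main obstacle I expect is purely bookkeeping: getting the index conventions of $\Gamma'$, the block $W=\binom{X}{Y}$, and the poset order all consistent, so that the ``large block'' really starts where the statement says and the ordering of factors comes out with $i_t$ decreasing rather than increasing. In particular one must verify that the ASL/poset comparability of successive factors in a standard monomial (each $\le$ the next in the coordinatewise order of $\Gamma(m\times 2n)$) is precisely what pins down the monotonicity of the sequences $(i_t)$ and $(j_s)$, and that no factor can be ``mixed'' in a way other than a small prefix followed by a large suffix — this is exactly where strict monotonicity of the entries of a single minor plus the dichotomy from Lemma \ref{lem:lm in} is used. Once those conventions are fixed, each individual step is a short computation, and the equality of the two displayed formulas for $\lm(f)$ is then a matter of substituting $c\mapsto$ (column or row) labels.
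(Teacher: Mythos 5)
Your proposal is correct and follows essentially the same route as the paper, which derives the lemma directly from Lemmas \ref{lem:stu 3.1.8}, \ref{lem:lm one elem} and \ref{lem:lm in} exactly as you do (diagonal leading monomials of the minors, the support dichotomy forcing each factor to be a ``small prefix, large suffix'' element, then translation into the $T$-variables). The one point you flagged for double-checking resolves immediately: $j_s\le m$ because the condition $i=j\le m$ in Lemma \ref{lem:lm in} caps the small prefix at length $m$, and your bound $j_s\ge n-m$ is the intended reading of the paper's statement (its ``$j_v\leq n-m$'' is a typo for ``$\geq$'', as the later definition with $j=l,\ldots,m$, $l=n-m$, confirms).
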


Now we study $K[T]^G$.
First we consider the case where $m=n$.

\begin{definition}\rm
Set
$$
\det(X+Y)=f_{n,0}+f_{n-1,1}+\cdots+f_{0,n},
$$
where $f_{k,n-k}$ is the sum of monomials whose degree with respect to 
$T_{ij1}$ and $T_{ij2}$ is $k$ and $n-k$ respectively.
\end{definition}

\begin{remark}\rm
\mylabel{rem:lm f}
Set $X=[\xxx_1$, \ldots, $\xxx_n]$ and $Y=[\yyy_1$, \ldots, $\yyy_n]$
and for $\SSS\subset[n]$, set
$$
\zzz_j^\SSS\define
\left\{\begin{array}{ll}
\xxx_j\quad&(j\in\SSS)\\
\yyy_j\quad&(j\not\in\SSS).
\end{array} \right.
$$
Then
$$
f_{k, n-k}=\sum_{\SSS\subset[n], |\SSS|=k}\det[\zzz_1^\SSS,\ldots,\zzz_n^\SSS].
$$
In particular, $\lm(f_{k,n-k})=T_{111}\cdots T_{kk1} T_{k+1,k+1,2}\cdots T_{nn2}$,
and therefore leading monomials of $f_{n,0}$, $f_{n-1,1}$, \ldots, $f_{0,n}$
are algebraically independent over $K$.
It follows that $f_{n,0}$, $f_{n-1,1}$, \ldots, $f_{0,n}$ are algebraically
independent over $K$.
\end{remark}

Next we state the following:

\begin{lemma}
\mylabel{lem:det and f}
$\KKK[\det(cX+dY)\mid c,d\in\KKK]=
\KKK[f_{n,0}, f_{n-1,1}$, \ldots, $f_{0,n}]$.
\end{lemma}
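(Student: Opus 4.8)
The plan is to prove the equality of the two $K$-algebras by a double inclusion, exploiting the polynomial identity
\[
\det(cX+dY)=\sum_{k=0}^{n}c^{k}d^{n-k}f_{k,n-k},
\]
which follows directly from the definition of the $f_{k,n-k}$ together with multilinearity of the determinant in the columns (each column of $cX+dY$ is $c\xxx_j+d\yyy_j$, and expanding gives exactly the column-subset sum of Remark~\ref{rem:lm f} weighted by $c^{k}d^{n-k}$). The inclusion $\KKK[\det(cX+dY)\mid c,d\in\KKK]\subset\KKK[f_{n,0},\ldots,f_{0,n}]$ is then immediate, since every generator $\det(cX+dY)$ of the left-hand ring is visibly a $K$-linear combination of the $f_{k,n-k}$.

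For the reverse inclusion I would use a Vandermonde argument. Since $K$ is infinite, pick $n+1$ distinct elements $\lambda_0,\lambda_1,\ldots,\lambda_n\in K$ and consider the $n+1$ polynomials $g_t\define\det(X+\lambda_t Y)=\sum_{k=0}^{n}\lambda_t^{\,n-k}f_{k,n-k}$ for $t=0,\ldots,n$. The coefficient matrix $(\lambda_t^{\,n-k})_{0\le t\le n,\ 0\le k\le n}$ is (up to column reordering) a Vandermonde matrix in the distinct nodes $\lambda_0,\ldots,\lambda_n$, hence invertible over $K$. Inverting this linear system expresses each $f_{k,n-k}$ as a $K$-linear combination of $g_0,\ldots,g_n$, each of which lies in $\KKK[\det(cX+dY)\mid c,d\in\KKK]$ (take $(c,d)=(1,\lambda_t)$). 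This gives $f_{k,n-k}\in\KKK[\det(cX+dY)\mid c,d\in\KKK]$ for all $k$, and hence the desired containment.

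I do not anticipate a serious obstacle here; the only point requiring a little care is that the argument uses the hypothesis that $K$ is infinite in an essential way, so that $n+1$ distinct scalars are available to make the Vandermonde matrix invertible. One should also note that allowing $c=0$ adds nothing new — $\det(0\cdot X+dY)=d^{n}f_{0,n}$ is already covered — so restricting to $c=1$ in the reverse inclusion is harmless, and the homogeneous-in-$(c,d)$ nature of the generators means the two-parameter family is really a one-parameter family up to scalar multiples. Assembling these two inclusions yields the stated equality of rings.
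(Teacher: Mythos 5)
Your proposal is correct and follows essentially the same route as the paper: the expansion $\det(cX+dY)=\sum_{k=0}^{n}c^{k}d^{n-k}f_{k,n-k}$ gives one inclusion immediately, and the reverse inclusion is obtained by specializing to a one-parameter family (the paper uses $\det(cX+Y)$ with $c$ varying, you use $\det(X+\lambda Y)$) and inverting a Vandermonde matrix, which requires $K$ infinite exactly as you note. No substantive difference.
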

\begin{proof}
Since $\det(cX+dY)=\sum_{k=0}^n c^{n-k}d^k f_{n-k,k}$,
it is clear that
$\KKK[\det(cX+dY)\mid c,d\in\KKK]\subset
\KKK[f_{n,0}, f_{n-1,1}$, \ldots, $f_{0,n}]$.
On the other hand, since
$\det(cX+Y)=\sum_{k=0}^m c^{n-k} f_{n-k,k}$ and $\KKK$ is an infinite field,
by the argument using Vandermonde determinant, we see that
$f_{n-k,k}\in \KKK[\det(cX+Y)\mid c\in\KKK]$ for any $k$.
\end{proof}
Now we state the following:

\begin{thm}
\mylabel{thm:m=n}
Suppose $m=n$.
Then
$f_{n,0}$, $f_{n-1,1}$, \ldots, $f_{0,n}$ is a sagbi basis of $K[T]^G$.
In particular, $K[T]^G=K[f_{n,0}$, $f_{n-1,1}$, \ldots, $f_{0,n}]$
and $K[T]^G$ is isomorphic to the polynomial ring with $n+1$ variables over $K$.
\end{thm}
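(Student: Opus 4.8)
The plan is to show two containments. The inclusion $K[f_{n,0},\ldots,f_{0,n}]\subseteq K[T]^G$ comes for free from Lemma~\ref{lem:det and f}: each $\det(cX+dY)$ is of the form $\det(P\transpose(cX+dY)Q)/(\det P\det Q)=\det(cX+dY)$ under the action of $(P,Q)\in G$ since $\det P=\det Q=1$, so each $\det(cX+dY)$, hence each $f_{n-k,k}$, lies in $K[T]^G$. The substance is the reverse inclusion, together with the sagbi (subalgebra Gröbner) assertion. By Remark~\ref{rem:lm f} the leading monomials $\lm(f_{n-k,k})=T_{111}\cdots T_{kk1}T_{k+1,k+1,2}\cdots T_{nn2}$ ($0\le k\le n$) are monomials in the $n+1$ "diagonal" variables, and they are multiplicatively independent; so to prove the sagbi claim it suffices to show that for every nonzero $f\in K[T]^G=\kgammagammaprime$ (Lemma~\ref{lem:kgammagammaprime}), the monomial $\lm(f)$ lies in the multiplicative monoid generated by $\lm(f_{n,0}),\ldots,\lm(f_{0,n})$. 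Once that is done, the standard sagbi argument (subtract an appropriate monomial multiple of a product of the $f_{n-k,k}$ to strictly lower the leading monomial, and induct, using that the chosen monomial order is a well-order on monomials of bounded degree) gives $K[T]^G=K[f_{n,0},\ldots,f_{0,n}]$; and algebraic independence of the $f_{n-k,k}$ is already recorded in Remark~\ref{rem:lm f}, so the ring is a polynomial ring in $n+1$ variables.

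The key step is therefore the description of $\lm(f)$ for $f\in\kgammagammaprime$ when $m=n$. This is exactly what Lemma~\ref{lem:lm f} delivers: specializing $m=n$ there, the standard monomial $\mu$ on $\Gamma(n\times 2n)$ attached to $\lm(f)$ has the shape $\mu=\prod_{t=1}^u[1,2,\ldots,i_t,n+i_t+1,\ldots,2n]$ with $n\ge i_1\ge\cdots\ge i_u\ge 0$, and correspondingly
$$
\lm(f)=\prod_{t=1}^u T_{111}\cdots T_{i_t i_t 1}\,T_{i_t+1,i_t+1,2}\cdots T_{nn2}
=\prod_{t=1}^u \lm(f_{i_t,\,n-i_t}).
$$
(One checks the second factor index: with $m=n$ the block $T_{i_t+1,n-m+i_t+1,2}\cdots T_{mn2}$ becomes $T_{i_t+1,i_t+1,2}\cdots T_{nn2}$, which is precisely the $T_{\cdot\cdot2}$-part of $\lm(f_{i_t,n-i_t})$.) Thus $\lm(f)$ is automatically a product of the $\lm(f_{n-k,k})$, which is the needed monoid membership. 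So Lemma~\ref{lem:lm f} does essentially all the combinatorial work, and the remaining task is just to run the sagbi descent.

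I expect the main obstacle to be purely bookkeeping rather than conceptual: namely, verifying carefully that the exponent with which each $f_{i_t,n-i_t}$ occurs can be chosen so that $\lm$ of the product $\prod_t f_{i_t,n-i_t}$ equals $\lm(f)$ \emph{exactly} (not merely divides it) — this uses that in the polynomial ring $K[f_{n,0},\ldots,f_{0,n}]$ distinct standard monomials have distinct leading monomials, which in turn follows from the multiplicative independence of the $n+1$ diagonal leading monomials noted in Remark~\ref{rem:lm f}. Granting that, the induction on $\lm(f)$ terminates because the degree-lexicographic order restricted to monomials of a fixed total degree is a well-order, and each subtraction step $f\mapsto f-c\prod_t f_{i_t,n-i_t}$ strictly decreases $\lm$ while staying inside the ($G$-invariant, since both $f$ and the subtracted term are) subalgebra $\kgammagammaprime$. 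This yields simultaneously that $\{f_{n-k,k}\}$ is a sagbi basis and that it generates $K[T]^G$, completing the proof.
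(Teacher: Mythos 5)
Your proposal is correct and follows essentially the same route as the paper: one inclusion via Lemma \ref{lem:det and f} and $G$-invariance of $\det(cX+dY)$, and the reverse inclusion plus the sagbi claim by combining Lemma \ref{lem:kgammagammaprime}, Lemma \ref{lem:lm f} and Remark \ref{rem:lm f} to show every leading monomial of a nonzero invariant is a product of the $\lm(f_{k,n-k})$. The sagbi descent you spell out is exactly what the paper invokes as the ``basic property of sagbi basis,'' so there is no substantive difference.
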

\begin{proof}
Clearly, for any $c$ and $d\in K$, $\det(cX+dY)$ is invariant under the
action of $G$.
Therefore,
$K[T]^G\supset K[\det(cX+dY)\mid c$, $d\in K]=K[f_{n,0}$, $f_{n-1,1}$, \ldots, $f_{0,n}]$
by Lemma \ref{lem:det and f}.
On the other hand, 
for any $f\in\kgammagammaprime$ with $f\neq 0$, we see that $\lm(f)$ is contained in 
$K[\lm(f_{n,0})$, \ldots, $\lm(f_{0,n})]$ by Lemma \ref{lem:lm f} and Remark \ref{rem:lm f}.
So the second sentence of the theorem follows by Lemma \ref{lem:kgammagammaprime}.
The third sentence follows from the basic property of sagbi basis
and Remark \ref{rem:lm f}.
\end{proof}

Next we consider the case where $m<n\leq 2m$.
Let $f$ a non-zero element of $\kgammagammaprime$.
We use the notation of Lemma \ref{lem:lm f}.

\begin{definition}
\rm
For an integer $i$ with $0\leq i\leq m$, we set
$$
[[i,m-i]]\define[1,2,\ldots, i,2n-m+i+1,2n-m+i+2,\ldots, 2n]
\in\Gamma(m\times 2n)
$$
and for an integer $j$ with $n-m\leq j\leq m$, we set
$$
\langle\langle j,n-j\rangle\rangle\define
\langle 1,2,\ldots,j,2m-n+j+1,2m-n+j+2,\ldots,2m\rangle
\in\Gamma'(2m\times n).
$$
\end{definition}

\begin{remark}
\rm
By Lemma \ref{lem:lm f}, we see that 
$\mu=\prod_{t=1}^u[[i_t,m-i_t]]$
and
$
\nu=\prod_{s=1}^v\langle\langle j_s,n-j_s\rangle\rangle
$.
\end{remark}

\begin{definition}
\rm
Set $l=n-m$.
Also set 
$$
\mu=\prod_{i=0}^m[[i,m-i]]^{c_i}
\quad\mbox{and}\quad
\nu=\prod_{j=l}^m\langle\langle j,n-j\rangle\rangle^{d_j},
$$
that is, 
$c_i=|\{t\mid i_t=i\}|$ and $d_j=|\{s\mid j_s=j\}|$.
We define 
$$
L(i,m-i)\define c_i
\quad\mbox{for $i=0$, $1$, \ldots, $m$}
$$
and
$$
L\begin{pmatrix}j\\ n-j\end{pmatrix}
\define d_j
\quad\mbox{for $j=l$, $l+1$, \ldots, $m$}.
$$
\end{definition}

By comparing the exponent of $T_{mm1}$ of
$\lm(f)=\lm(\mu_Z)=\lm(\nu_W)$, we see that
$$
L(m,0)=
L\begin{pmatrix} m\\ l\end{pmatrix}.
$$
Similarly, by comparing the exponent of $T_{m-1,m-1,1}$, we see that
$$
L(m-1,1)+L(m,0)=
L\begin{pmatrix} m-1\\ l+1\end{pmatrix}+
L\begin{pmatrix} m\\ l\end{pmatrix}.
$$
Therefore,
$$
L(m-1,1)=
L\begin{pmatrix} m-1\\ l+1\end{pmatrix}.
$$
By continuing the same argument, we see that
\begin{equation}
\mylabel{eq:1}
L(m-k,k)=
L\begin{pmatrix} m-k\\ l+k\end{pmatrix}
\quad\mbox{for $k=0$, $1$, \ldots, $m-l$.}
\end{equation}
By symmetry, we also see that
\begin{equation}
\mylabel{eq:2}
L(k,m-k)=
L\begin{pmatrix} l+k\\ m-k\end{pmatrix}
\quad\mbox{for $k=0$, $1$, \ldots, $m-l$.}
\end{equation}

Next, by comparing the exponent of $T_{l-1,l-1,1}$ of
$\lm(f)=\lm(\mu_Z)=\lm(\nu_W)$, we obtain the following equation.
\begin{eqnarray*}
&&
L(l-1,m-l+1)+L(l,m-l)+\cdots+L(m,0)\\
&=&
L\begin{pmatrix} l\\ m-l\end{pmatrix}+
L\begin{pmatrix} l+1\\ m-l-1\end{pmatrix}+\cdots+
L\begin{pmatrix} m\\ l\end{pmatrix}.
\end{eqnarray*}
Therefore we see, by equation \refeq{eq:1}, that
$$
L(l-1,m-l+1)=0.
$$
Similarly, by comparing the exponent of $T_{l-2,l-2,1}$, we see that
\begin{eqnarray*}
&&
L(l-2,m-l+2)+L(l-1,m-l+1)+\cdots+L(m,0)\\
&=&
L\begin{pmatrix} l\\ m-l\end{pmatrix}+
L\begin{pmatrix} l+1\\ m-l-1\end{pmatrix}+\cdots+
L\begin{pmatrix} m\\ l\end{pmatrix}
\end{eqnarray*}
and from this, we see that
$$
L(l-2,m-l+2)=0.
$$
By continuing this argument, we see that
\begin{equation}
\mylabel{eq:3}
L(1,m-1)=L(2,m-2)=\cdots=L(l-1,m-l+1)=0.
\end{equation}

Let $r$ be an integer with $0<r<l$.
By equations \refeq{eq:3}, \refeq{eq:2} and \refeq{eq:1},
we see that 
\begin{eqnarray*}
0=L(r,m-r)&=&L\begin{pmatrix}r+l\\ m-r\end{pmatrix}=L(r+l,m-r-l)\\
&=&L\begin{pmatrix}r+2l\\ m-r-l\end{pmatrix}=L(r+2l,m-r-2l)\\
&=&\cdots \\
&=&
L\begin{pmatrix}r+ql\\ m-r-(q-1)l\end{pmatrix}=L(r+ql,m-r-ql),
\end{eqnarray*}
where $q=\lfloor (m-r)/l\rfloor$.
By this equation and symmetry, we see the following:

\begin{lemma}
\mylabel{lem:not mult}
Let $k$ be an integer with $1\leq k\leq m$.
If $l\nmid k$, then $L(k,m-k)=L(m-k,k)=0$.
\end{lemma}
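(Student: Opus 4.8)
The plan is to read the statement off directly from the chain of equalities displayed immediately above the lemma, organised by residue classes modulo $l$; no new idea is needed, only careful bookkeeping.

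First I would restate the content of that chain in the form I want to use it. For each fixed $r$ with $0<r<l$, the chain shows that $L(r+jl,\,m-r-jl)=0$ for \emph{every} integer $j\geq 0$ with $r+jl\leq m$. Indeed, the base case $j=0$ is the vanishing $L(r,m-r)=0$ supplied by \refeq{eq:3} (applicable precisely because $1\leq r\leq l-1$), and the passage from the term with index $j$ to the term with index $j+1$ consists of one application of \refeq{eq:2} (with $k=r+jl$) followed by one application of \refeq{eq:1}; both are legitimate exactly as long as the next index $r+(j+1)l$ does not exceed $m$. Hence the chain reaches $L(r+ql,\,m-r-ql)$ with $q=\lfloor(m-r)/l\rfloor$, and every term $L(r+jl,\,m-r-jl)$ with $r+jl\leq m$ occurs along it.

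Next, given $k$ with $1\leq k\leq m$ and $l\nmid k$, I would write $k=r+jl$ by division with remainder, with $1\leq r\leq l-1$ (the remainder is nonzero precisely because $l\nmid k$) and $j\geq 0$ uniquely determined. Since $k\leq m$ we have $r+jl\leq m$, so $L(k,m-k)=L(r+jl,\,m-r-jl)$ is one of the terms just shown to vanish, giving $L(k,m-k)=0$. For the remaining equality I would invoke symmetry: interchanging the roles of $X$ and $Y$ exchanges $L(i,m-i)$ with $L(m-i,i)$ and $L\begin{pmatrix}i\\ n-i\end{pmatrix}$ with $L\begin{pmatrix}n-i\\ i\end{pmatrix}$, and carries \refeq{eq:1}, \refeq{eq:2}, \refeq{eq:3} into one another, so running the mirrored version of the preceding argument yields $L(m-k,k)=0$ for the same range of $k$.

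I do not expect a real obstacle; the proof should be only a few lines. The one point that needs to be verified is that every $k$ with $l\nmid k$ and $1\le k\le m$ is genuinely reached by the chain, i.e.\ that $r+jl\le m$, which is immediate from $k\le m$. It is also worth noting the degenerate cases: if $l=1$ the hypothesis $l\nmid k$ is never satisfied, and if $l=m$ the conclusion already follows from \refeq{eq:3} alone, so the chain argument is only needed when $1<l<m$.
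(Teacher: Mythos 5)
Your proposal is correct and is essentially the paper's own argument: the paper proves the lemma precisely by the chain $0=L(r,m-r)=\cdots=L(r+ql,m-r-ql)$ for $0<r<l$, obtained by starting from \refeq{eq:3} and alternating \refeq{eq:2} and \refeq{eq:1}, and then invoking the $X\leftrightarrow Y$ symmetry for $L(m-k,k)$, which is exactly the bookkeeping you spell out (including the correct range condition $r+(j+1)l\leq m$ for each step).
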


If $l\nmid m$, then by Lemma \ref{lem:not mult}, we see that $L(0,m)=0$.
Therefore,
\begin{eqnarray*}
0=L(0,m)&=&
L\begin{pmatrix}l\\ m\end{pmatrix}=L(l,m-l)\\
&=&
L\begin{pmatrix}2l\\ m-l\end{pmatrix}=L(2l,m-2l)\\
&=&\cdots \\
&=&
L\begin{pmatrix}ql\\ m-(q-1)l\end{pmatrix}=L(ql,m-ql),
\end{eqnarray*}
where $q=\lfloor m/l\rfloor$.
By this equation and Lemma \ref{lem:not mult}, we see the following:

\begin{lemma}
\mylabel{lem:trivial case}
If $l\nmid m$, then 
$$
L(k,m-k)=0\quad
\mbox{for $k=0$, $1$, \ldots, $m$.}
$$
In particular, $f\in K$.
\end{lemma}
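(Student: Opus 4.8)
The plan is to leverage the numerical constraints already extracted from the equality $\lm(f)=\lm(\mu_Z)=\lm(\nu_W)$, namely Lemma~\ref{lem:not mult} together with the chain of equalities \refeq{eq:1}, \refeq{eq:2}, \refeq{eq:3}, and to push the ``propagation'' argument one step further starting from the index $(0,m)$. First I would observe that if $l\nmid m$, then in particular $l\nmid 0$ fails trivially, so Lemma~\ref{lem:not mult} does not directly kill $L(0,m)$; instead one argues as in the displayed computation preceding Lemma~\ref{lem:not mult} but initialized at $k=0$: comparing the exponent of $T_{l-1,l-1,1}$ (or rather, re-examining equation \refeq{eq:3} at the boundary) shows $L(0,m)$ is forced to vanish because $L(0,m)$ would have to equal a sum of $L$-values all of which are already known to be $0$ by \refeq{eq:3}. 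Then I would run the orbit chain $L(0,m)=L(l,m-l)=L(2l,m-2l)=\cdots=L(ql,m-ql)$ with $q=\lfloor m/l\rfloor$, exactly as in the display just before Lemma~\ref{lem:trivial case}, concluding all these vanish.

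Second, I would combine this with Lemma~\ref{lem:not mult}: every $L(k,m-k)$ with $l\nmid k$ is zero, and every $L(k,m-k)$ with $l\mid k$ (including $k=0$) is zero by the orbit chain just described, so $L(k,m-k)=0$ for all $k=0,1,\ldots,m$. Since $L(i,m-i)=c_i$ is by definition the exponent of $[[i,m-i]]$ in the standard monomial $\mu$, having all $c_i=0$ means $\mu$ is the empty product, i.e.\ $\mu=1$. But $\lm(f)=\lm(\mu_Z)$ by the choice of $\mu$ in Lemma~\ref{lem:lm f}, so $\lm(f)=1$, forcing $f$ to be a nonzero constant; hence $f\in K$. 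To promote this from ``$\lm(f)\in K$'' to ``$\kgammagammaprime=K$'' one notes that every nonzero element of this intersection has leading monomial $1$, and a standard argument (subtracting the constant term and inducting on the leading monomial, which strictly decreases) shows the whole element is constant.

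The main obstacle I anticipate is the boundary bookkeeping: making sure the comparison-of-exponents argument is valid all the way down to the index $T_{0,0,1}$ / the transition between the ``$L(\cdot,m-\cdot)$'' family indexed by $\Gamma(m\times 2n)$ and the ``$L\binom{\cdot}{\cdot}$'' family indexed by $\Gamma'(2m\times n)$, and correctly tracking which equations among \refeq{eq:1}--\refeq{eq:3} apply at the endpoints (e.g.\ whether $L(0,m)$ is covered by \refeq{eq:3} or needs the separate one-line derivation). One must also confirm that the orbit chain closes up properly, i.e.\ that $m-ql \geq 0$ and that no index steps outside the legal range $[l,m]$ for the $\Gamma'$-side or $[0,m]$ for the $\Gamma$-side; this is where the hypothesis $l\nmid m$ and the floor function $q=\lfloor m/l\rfloor$ do their work. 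Everything else is a routine repetition of the exponent-comparison technique already displayed in the text, so once the endpoint case $L(0,m)=0$ is pinned down the rest is immediate.

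\begin{proof}
Suppose $l\nmid m$. By Lemma~\ref{lem:not mult}, $L(k,m-k)=0$ whenever $l\nmid k$, and in particular $L(l-1,m-l+1)=L(l-2,m-l+2)=\cdots=L(1,m-1)=0$, which is equation \refeq{eq:3}. Comparing the exponent of $T_{l-1,l-1,1}$ in $\lm(f)=\lm(\mu_Z)=\lm(\nu_W)$, as in the derivation of \refeq{eq:3}, yields
$$
L(0,m)+L(l-1,m-l+1)+\cdots+L(m,0)
=L\begin{pmatrix} l\\ m-l\end{pmatrix}+\cdots+L\begin{pmatrix} m\\ l\end{pmatrix},
$$
and since the two sides differ only in the term $L(0,m)$ once \refeq{eq:1} and \refeq{eq:3} are taken into account, we get $L(0,m)=0$. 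Hence, exactly as in the display preceding this lemma,
$$
0=L(0,m)=L\begin{pmatrix}l\\ m\end{pmatrix}=L(l,m-l)=\cdots=L(ql,m-ql),
$$
where $q=\lfloor m/l\rfloor$, so $L(k,m-k)=0$ for every $k$ with $l\mid k$. Together with Lemma~\ref{lem:not mult} this gives $L(k,m-k)=0$ for all $k=0,1,\ldots,m$.

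Thus $c_i=L(i,m-i)=0$ for all $i$, so the standard monomial $\mu=\prod_{i=0}^m[[i,m-i]]^{c_i}$ of Lemma~\ref{lem:lm f} is the empty product $\mu=1$, whence $\lm(f)=\lm(\mu_Z)=1$. Therefore every nonzero $f\in\kgammagammaprime$ has $\lm(f)=1$, which forces $f\in K$.
\end{proof}
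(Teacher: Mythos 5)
Your overall architecture (kill $L(0,m)$, run the orbit chain $L(0,m)=L(l,m-l)=\cdots=L(ql,m-ql)$, combine with Lemma \ref{lem:not mult}, then conclude $\mu=1$, $\lm(f)=1$, $f\in K$ via the degree order) is the paper's argument, but the one step where you deviate — the derivation of $L(0,m)=0$ — is wrong as written. First, your premise that Lemma \ref{lem:not mult} ``does not directly kill $L(0,m)$'' is a misreading: its conclusion is symmetric, asserting \emph{both} $L(k,m-k)=0$ and $L(m-k,k)=0$, so taking $k=m$ (legal since $1\leq m\leq m$ and $l\nmid m$ by hypothesis) gives $L(0,m)=L(m,0)=0$ at once. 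That single application is precisely the paper's entire first step.

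Second, the substitute equation you offer is false. Comparing the exponent of $T_{l-1,l-1,1}$ in $\lm(\mu_Z)=\lm(\nu_W)$ yields, exactly as in the paper's derivation of \refeq{eq:3},
$$
L(l-1,m-l+1)+L(l,m-l)+\cdots+L(m,0)
=L\begin{pmatrix} l\\ m-l\end{pmatrix}+\cdots+L\begin{pmatrix} m\\ l\end{pmatrix},
$$
\emph{without} the term $L(0,m)$: the factor $[[0,m]]=[2n-m+1,\ldots,2n]$ has leading monomial $T_{1,l+1,2}\cdots T_{mn2}$, which contains no variable of the form $T_{jj1}$, so $L(0,m)$ never enters the exponent of any $T_{jj1}$ (and under the hypothesis $l\nmid m$ we have $l\geq 2$, so $j=l-1\geq 1$ is a genuine index). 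Hence this comparison only reproves $L(l-1,m-l+1)=0$ and gives no information about $L(0,m)$; your conclusion ``the two sides differ only in the term $L(0,m)$'' rests on the spurious extra term. The error is easily repaired — replace that paragraph by the correct citation of Lemma \ref{lem:not mult} with $k=m$ — after which your chain argument and the final step ($c_i=0$ for all $i$, so $\mu=1$, $\lm(f)=1$, and since the order is a degree order, $f\in K$; the subtract-and-induct remark is unnecessary) go through and coincide with the paper's proof.
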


Set $d=\gcd(m,n)$.
Note that $l | m$ if and only if $d=l$.
Now consider the case where $l | m$.
Set $c=L(0,m)$.
Then by equations \refeq{eq:2} and \refeq{eq:1}, we see that
\begin{eqnarray*}
c=L(0,m)&=&L\begin{pmatrix}d\\ m\end{pmatrix}=L(d,m-d)\\
&=&L\begin{pmatrix}2d\\ m-d\end{pmatrix}=L(2d,m-2d)\\
&=&\cdots \\
&=&
L\begin{pmatrix}m\\ d\end{pmatrix}=L(m,0),
\end{eqnarray*}
Therefore, by Lemmas \ref{lem:lm one elem}  and \ref{lem:lm f}, we see the following:

\begin{lemma}
\mylabel{lem:lm f m<n}
Suppose $l=d$ and set $s=m/d$.
Then 
\begin{eqnarray*}
\lm(f)&=&(T_{111}T_{221}\cdots T_{dd1})^{sc}\\
&&\times (T_{d+1,d+1,1}T_{d+2,d+2,1}\cdots T_{2d,2d,1})^{(s-1)c}\\
&&\times \cdots\\
&&\times (T_{m-d+1,m-d+1,1}T_{m-d+2,m-d+2,1}\cdots T_{mm1})^c\\
&&\times (T_{1,d+1,2}T_{2,d+2,2}\cdots T_{d,2d,2})^c\\
&&\times (T_{d+1,2d+1,2}T_{d+2,2d+2,2}\cdots T_{2d,3d,2})^{2c}\\
&&\times \cdots\\
&&\times (T_{m-d+1,n-d+1,2}T_{m-d+2,n-d+2,2}\cdots T_{mn2})^{sc}.
\end{eqnarray*}
\end{lemma}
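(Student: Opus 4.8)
The plan is to feed the values of $L(i,m-i)$ computed just above into the description of $\lm(f)$ provided by Lemma \ref{lem:lm f}, and then to reorganize the resulting product by collecting the total exponent of each indeterminate $T_{ijk}$. First I would recall, from Lemma \ref{lem:lm f} together with Lemma \ref{lem:lm one elem} (which yields $\lm([[i,m-i]]_Z)=T_{111}T_{221}\cdots T_{ii1}\cdot T_{i+1,n-m+i+1,2}\cdots T_{mn2}$), that
$$
\lm(f)=\prod_{i=0}^{m}\bigl(T_{111}T_{221}\cdots T_{ii1}\cdot T_{i+1,n-m+i+1,2}\cdots T_{mn2}\bigr)^{L(i,m-i)}.
$$
Since $l=d$, Lemma \ref{lem:not mult} makes every factor whose index $i$ is not a multiple of $d$ trivial, while the chain of equalities established just above the statement gives $L(td,m-td)=c$ for $t=0,1,\ldots,s$. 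Hence
$$
\lm(f)=\prod_{t=0}^{s}\bigl(T_{111}T_{221}\cdots T_{td,td,1}\cdot T_{td+1,n-m+td+1,2}\cdots T_{mn2}\bigr)^{c}.
$$

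The remaining step is a power count, using $n-m=d$. The variable $T_{aa1}$ (for $1\le a\le m$) occurs in the $t$-th factor precisely when $td\ge a$, i.e.\ for $\lceil a/d\rceil\le t\le s$, so its exponent in $\lm(f)$ is $c\,(s-\lceil a/d\rceil+1)$; on the block $a\in\{(k-1)d+1,\ldots,kd\}$ this equals $c\,(s-k+1)$, which is exactly the first half of the asserted formula. The variable $T_{a,a+d,2}$ occurs in the $t$-th factor precisely when $td+1\le a$, i.e.\ for $0\le t\le\lfloor(a-1)/d\rfloor$, so its exponent is $c\,(\lfloor(a-1)/d\rfloor+1)$; on the same block this equals $ck$, the second half of the formula. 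Assembling the two halves gives the claimed expression for $\lm(f)$.

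I expect no genuine obstacle here: the argument is pure bookkeeping once the $L$-values are known. The only points needing care are the translation of $[[i,m-i]]$ through Lemma \ref{lem:lm one elem} — keeping straight that an entry $Z_{ab}$ with $b\le n$ is $T_{ab1}$ while one with $b>n$ is $T_{a,b-n,2}$ — and the repeated use of $n=m+d=(s+1)d$ to rewrite the column indices of the $Y$-entries. (One could instead run the computation on the $\nu$-side with $\langle\langle j,n-j\rangle\rangle$; the two routes must agree because $\lm(\mu_Z)=\lm(f)=\lm(\nu_W)$.)
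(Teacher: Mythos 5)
Your proposal is correct and follows exactly the paper's route: the paper deduces this lemma from Lemma \ref{lem:lm f}, Lemma \ref{lem:lm one elem}, Lemma \ref{lem:not mult} and the chain $c=L(0,m)=L(d,m-d)=\cdots=L(m,0)$, leaving the exponent count implicit. Your explicit bookkeeping (exponent $c(s-\lceil a/d\rceil+1)$ for $T_{aa1}$ and $c(\lfloor(a-1)/d\rfloor+1)$ for $T_{a,a+d,2}$) checks out and simply spells out what the paper asserts.
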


Consider the following $(mn/d)\times (mn/d)$ determinant, 
where $m/d$ blocks of $n$ columns are aligned horizontally and 
$n/d$ blocks of $m$ rows are aligned
vertically.
\begin{equation}
\mylabel{eqn:inv det}
\left|
\begin{array}{ccccc}
X\\
Y&X\\
&Y&X\\
&&Y&\ddots\\
&&&\ddots&X\\
&&&&Y
\end{array}
\right|
\end{equation}
Note that the Laplace expansion by $n/d$ blocks of $m$ rows of this determinant
shows that this determinant is 
$\slin(m,\KKK)$-invariant
and  the Laplace expansion by $m/d$ blocks of $n$ columns of the determinant
shows that this determinant
is $\slin(n,\KKK)$-invariant.
That is, the determinant \refeq{eqn:inv det} is $G$-invariant.
We also see that the leading monomial of the determinant \refeq{eqn:inv det}
is
\begin{eqnarray*}
&&(T_{111}T_{221}\cdots T_{dd1})^{s}\\
&\times & (T_{d+1,d+1,1}T_{d+2,d+2,1}\cdots T_{2d,2d,1})^{(s-1)}\\
&\times &\cdots\\
&\times &(T_{m-d+1,m-d+1,1}T_{m-d+2,m-d+2,1}\cdots T_{mm1})\\
&\times &(T_{1,d+1,2}T_{2,d+2,2}\cdots T_{d,2d,2})\\
&\times &(T_{d+1,2d+1,2}T_{d+2,2d+2,2}\cdots T_{2d,3d,2})^{2}\\
&\times &\cdots\\
&\times &(T_{m-d+1,n-d+1,2}T_{m-d+2,n-d+2,2}\cdots T_{mn2})^{s}
\end{eqnarray*}
by examining the Laplace expansion by $n/d$ blocks of $m$ rows.

Therefore, by Lemma \ref{lem:lm f m<n}, we see the following:

\begin{lemma}
\mylabel{lem:lm f is det}
$\lm(f)$ is a power of  the leading monomial of the determinant
\refeq{eqn:inv det}.
\end{lemma}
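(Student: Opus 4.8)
The plan is to prove the statement by a direct term-by-term comparison of two monomials that have already been made explicit: the monomial $\lm(f)$ computed in Lemma~\ref{lem:lm f m<n}, and the leading monomial of the determinant \refeq{eqn:inv det} computed just above via the Laplace expansion along the $n/d$ horizontal blocks of $m$ rows. Recall that in the situation at hand we have $l=d$ (equivalently $l\mid m$), $s=m/d$, and $c=L(0,m)$; write $D$ for the determinant \refeq{eqn:inv det}.

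Concretely, both $\lm(f)$ and $\lm(D)$ are products over the same $m/d+n/d$ ``block monomials'': the blocks $T_{111}\cdots T_{dd1},\ \ldots,\ T_{m-d+1,m-d+1,1}\cdots T_{mm1}$ in the first-slab variables, and the blocks $T_{1,d+1,2}\cdots T_{d,2d,2},\ \ldots,\ T_{m-d+1,n-d+1,2}\cdots T_{mn2}$ in the second-slab variables. In $\lm(D)$ the successive first-slab blocks carry exponents $s,s-1,\ldots,1$ and the successive second-slab blocks carry exponents $1,2,\ldots,s$, while by Lemma~\ref{lem:lm f m<n} the exponents of the corresponding blocks in $\lm(f)$ are exactly $sc,(s-1)c,\ldots,c$ and $c,2c,\ldots,sc$ respectively. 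Thus block by block the exponent in $\lm(f)$ is $c$ times the exponent in $\lm(D)$, so $\lm(f)=\bigl(\lm(D)\bigr)^{c}$, which is in particular a power of $\lm(D)$, as claimed.

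I do not expect any genuine obstacle here: all of the substantive work has already been done, namely the structural analysis of the standard monomials in $\kgammagammaprime$ that yields the relations \refeq{eq:1}, \refeq{eq:2}, \refeq{eq:3} and hence the closed form for $\lm(f)$ in Lemma~\ref{lem:lm f m<n}, together with the leading-term computation for the block-bidiagonal determinant $D$. The only thing that needs checking is that the two block decompositions list the same blocks in the same order with proportional exponents, and this is immediate on inspection of the two displays; if one wanted to be fully formal one could verify the match by comparing, for each fixed pair $(i,i)$ or $(i,n-m+i)$, the exponent of the single variable $T_{ii1}$ (resp.\ $T_{i,n-m+i,2}$) on the two sides using equations \refeq{eq:1} and \refeq{eq:2}.
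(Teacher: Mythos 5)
Your proposal is correct and is essentially the paper's own argument: the paper derives the lemma simply by juxtaposing the explicit block form of $\lm(f)$ from Lemma~\ref{lem:lm f m<n} (exponents $sc,(s-1)c,\ldots,c$ and $c,2c,\ldots,sc$) with the leading monomial of the determinant \refeq{eqn:inv det} obtained from the Laplace expansion (exponents $s,s-1,\ldots,1$ and $1,2,\ldots,s$), giving $\lm(f)=\lm(D)^{c}$. Your blockwise exponent comparison is exactly this step, made explicit.
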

By Lemmas \ref{lem:trivial case} and \ref{lem:lm f is det},
we see the following:

\begin{thm}
\mylabel{thm:m<n}
Suppose $m<n$ and set $d=\gcd(m,n)$.
\begin{enumerate}
\item
If $n=m+d$, then the determinant \refeq{eqn:inv det} is a
sagbi basis of $\KKK[T]^G$.
In particular, $K[T]^G$ is generated
by the determinant \refeq{eqn:inv det}.
\item
If $n\neq m+d$, then the ring of $G$-invariants in $\KKK[T]$ is $\KKK$.
\end{enumerate}
\end{thm}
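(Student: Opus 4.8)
The plan is to combine the structural results already established for the leading monomial of a non-zero $G$-invariant with the explicit $G$-invariant determinant \refeq{eqn:inv det}, using the theory of sagbi bases. Recall that we have reduced, by symmetry and the vanishing of $K[\Gamma'(\,{}^X_Y\,)]$ when $n>2m$, to the range $m<n\leq 2m$. Write $l=n-m$ and $d=\gcd(m,n)$, and note $l\mid m$ precisely when $l=d$, i.e.\ precisely when $n=m+d$.

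\textbf{Case $n\neq m+d$.} Here $l\nmid m$, so Lemma \ref{lem:trivial case} applies directly: for every non-zero $f\in\kgammagammaprime$ one has $L(k,m-k)=0$ for all $k$, which forces the standard monomial $\mu$ to be empty, hence $f\in K$. Since $K[T]^G=\kgammagammaprime$ by Lemma \ref{lem:kgammagammaprime}, this gives $K[T]^G=K$, which is statement (2).

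\textbf{Case $n=m+d$.} Here $l=d$, $s=m/d$, and Lemma \ref{lem:lm f m<n} pins down $\lm(f)$ for any non-zero $f\in K[T]^G$ to be exactly the $c$-th power of the monomial displayed after \refeq{eqn:inv det}, where $c=L(0,m)$; that is, $\lm(f)$ is a power of the leading monomial $M$ of the $G$-invariant determinant \refeq{eqn:inv det}, which is the content of Lemma \ref{lem:lm f is det}. Let $\Delta$ denote the determinant \refeq{eqn:inv det}; it lies in $K[T]^G$ as noted (Laplace expansion by the row-blocks shows $\slin(m,K)$-invariance, by the column-blocks shows $\slin(n,K)$-invariance). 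The initial algebra of $K[T]^G$ is therefore contained in $K[M]=K[\lm(\Delta)]$, and conversely $\lm(\Delta^c)=M^c$ lies in it for all $c\geq 0$; hence $\ini(K[T]^G)=K[\lm(\Delta)]$, so $\{\Delta\}$ is a sagbi basis of $K[T]^G$ (here one uses that a single element whose leading monomial generates the initial algebra is automatically a sagbi basis — there are no $S$-polynomial-type relations to check for one generator). By the standard property of sagbi bases, $K[T]^G=K[\Delta]$, and since $\Delta$ is a single element (necessarily transcendental over $K$, its leading monomial having infinite order), this is statement (1).

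The only genuine subtlety — and the step I would be most careful about — is the passage from ``$\ini(K[T]^G)=K[\lm(\Delta)]$'' to ``$\{\Delta\}$ is a sagbi basis'': one must invoke the correct definition, namely that a set $B\subseteq A$ is a sagbi basis of $A$ iff $\ini(A)$ is generated as a $K$-algebra by $\{\lm(b):b\in B\}$, and then the fact that $\ini(K[T]^G)$ really is \emph{all} of $K[\lm(\Delta)]$ rather than merely contained in it, which is exactly why Lemma \ref{lem:lm f is det} is phrased as an equality of the shape ``$\lm(f)$ is a power of $\lm(\Delta)$'' together with the observation that all such powers occur (take $f=\Delta^c$). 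Everything else is bookkeeping with the exponent identities \refeq{eq:1}, \refeq{eq:2}, \refeq{eq:3} already in hand.
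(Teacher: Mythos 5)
Your proposal is correct and follows essentially the same route as the paper: statement (2) is exactly Lemma \ref{lem:trivial case} combined with Lemma \ref{lem:kgammagammaprime} (plus the prior reduction for $n>2m$), and statement (1) is exactly Lemma \ref{lem:lm f is det} together with the already-noted $G$-invariance of the determinant \refeq{eqn:inv det}, packaged via the standard sagbi-basis argument. The paper leaves these final deductions implicit ("By Lemmas \ref{lem:trivial case} and \ref{lem:lm f is det}\ldots"), and your write-up simply spells them out, including the correct observation that a single invariant whose leading monomial generates the initial algebra is automatically a sagbi basis.
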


\section{The action of $\slin(m,K)\times \slin(n,K)\times \slin(2,K)$ and invariants of binary forms}
\mylabel{sec:sl sl sl}

Next we consider the action of $\slin(m,\KKK)\times \slin(n,\KKK)\times \slin(2,\KKK)$ on $\KKK[T]$.
We define the action of $G=\slin(m,\KKK)\times \slin(n,\KKK)$ on $\KKK[T]$ by the same way as
in the previous section and define the action of $\slin(2,\KKK)$ by
$$
\begin{pmatrix}a&b\\c&d\end{pmatrix}\cdot X
=aX+cY
\quad\mbox{and}\quad
\begin{pmatrix}a&b\\c&d\end{pmatrix}\cdot Y
=bX+dY
$$
for $\begin{pmatrix}a&b\\c&d\end{pmatrix}\in\slin(2,K)$.

\begin{remark}\rm
The action of $\slin(m,K)\times \slin(n,K)\times \slin(2,K)$
on $K[T]$ defined above and in the previous section coincides with the action on
$\sym(K^m\otimes K^n\otimes K^2)$ induced by the natural actions
of $\slin(m,K)$ on $K^m$, $\slin(n,K)$ on $K^n$ and $\slin(2,K)$ on $K^2$ 
under the natural isomorphism
$\sym(K^m\otimes K^n\otimes K^2)\simeq K[T]$ with
$T_{ijk}\leftrightarrow\eee_i^{(1)}\otimes \eee_j^{(2)}\otimes \eee_k^{(3)}$,
where $\eee_1^{(1)}$, \ldots, $\eee_m^{(1)}$;
$\eee_1^{(2)}$, \ldots, $\eee_n^{(2)}$ and 
$\eee_1^{(3)}$, $\eee_2^{(3)}$ are the canonical bases of
$K^m$, $K^n$ and $K^2$ respectively.
\end{remark}

We first consider the case where $m<n$.
First we note the following:

\begin{lemma}
Let $s$ be a positive integer.
And let $A$ be the $(s+1)m\times sn$ matix of the following form.
$$
\left[
\begin{array}{ccccc}
X\\
Y&X\\
&Y&X\\
&&Y&\ddots\\
&&&\ddots&X\\
&&&&Y
\end{array}
\right].
$$
Then for any $c\in \KKK$, there is a sequence of blockwise row and column
operations such that adding a scalar multiple of $j$-th row block to $i$-th
row block with $i<j$ and adding  a scalar multiple of $i$-th column block
to $j$-th column block with $i<j$ and transform $A$ into
$$
\left[
\begin{array}{ccccc}
X+cY\\
Y&X+cY\\
&Y&X+cY\\
&&Y&\ddots\\
&&&\ddots&X+cY\\
&&&&Y
\end{array}
\right].
$$
\end{lemma}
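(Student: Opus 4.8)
The plan is to realise the asserted transformation by multiplying $A$ on the left by a block upper unitriangular matrix $L$ of size $(s+1)m$ (acting on the $s+1$ row blocks) and on the right by a block upper unitriangular matrix $R$ of size $sn$ (acting on the $s$ column blocks), each having scalar blocks (every block a scalar times an identity matrix), chosen so that $B=LAR$, where $B$ is the target matrix. Granting such $L,R$, the lemma follows: every upper unitriangular matrix is a product of elementary transvections $I+\alpha E_{ij}$ with $i<j$, and tensoring such a factorisation of $L$ with $I_m$ (resp.\ of $R$ with $I_n$) expresses $L$ (resp.\ $R$) as a composite of exactly the permitted operations --- left multiplication by $I+\alpha(E_{ij}\otimes I_m)$ with $i<j$ is ``add $\alpha$ times row block $j$ to row block $i$'', and right multiplication by $I+\alpha(E_{ij}\otimes I_n)$ with $i<j$ is ``add $\alpha$ times column block $i$ to column block $j$''. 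Then $B=L(AR)$ is produced by first carrying out the column operations coming from $R$ and then the row operations coming from $L$.

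For the explicit matrices I would take block Pascal matrices. Let $P_c^{(r)}$ denote the $r\times r$ matrix whose $(i,j)$-entry is $\binom{j-1}{i-1}c^{\,j-i}$; it is upper unitriangular over any ring (so $\chara K$ plays no role), being the matrix in the monomial basis of the substitution $t\mapsto t+c$ on polynomials of degree $<r$, whence $P_c^{(r)}P_{-c}^{(r)}=I_r$. Put $L=P_c^{(s+1)}\otimes I_m$ and $R=P_{-c}^{(s)}\otimes I_n$. Writing $A=X\cdot P+Y\cdot Q$, where $P$ and $Q$ are the $(s+1)\times s$ scalar matrices with $P_{ij}=\delta_{ij}$ and $Q_{ij}=\delta_{i,j+1}$, one gets
\[
LAR=X\cdot\bigl(P_c^{(s+1)}\,P\,P_{-c}^{(s)}\bigr)+Y\cdot\bigl(P_c^{(s+1)}\,Q\,P_{-c}^{(s)}\bigr),
\]
and since the entries of $X$ and of $Y$ are distinct indeterminates, $B=LAR$ is equivalent to the two scalar identities $P_c^{(s+1)}\,P\,P_{-c}^{(s)}=P$ and $P_c^{(s+1)}\,Q\,P_{-c}^{(s)}=cP+Q$.

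Both identities I would check from the relation $P_c^{(r)}P_{-c}^{(r)}=I_r$ together with Pascal's rule. Since the last row of the first $s$ columns of $P_c^{(s+1)}$ vanishes, $P_c^{(s+1)}P$ equals $\begin{pmatrix}P_c^{(s)}\\ 0\end{pmatrix}$, so $P_c^{(s+1)}PP_{-c}^{(s)}=\begin{pmatrix}P_c^{(s)}P_{-c}^{(s)}\\ 0\end{pmatrix}=\begin{pmatrix}I_s\\ 0\end{pmatrix}=P$. For the second, $P_c^{(s+1)}Q$ is the $(s+1)\times s$ array $N$ formed by the last $s$ columns of $P_c^{(s+1)}$, so $N_{ik}=\binom{k}{i-1}c^{\,k+1-i}$, and a direct computation with Pascal's rule gives $N=(cP+Q)\,P_c^{(s)}$; hence $P_c^{(s+1)}QP_{-c}^{(s)}=N\,P_{-c}^{(s)}=(cP+Q)\,P_c^{(s)}P_{-c}^{(s)}=cP+Q$.

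The one genuine difficulty is conceptual rather than computational. A naive greedy attempt --- for instance successively adding $c$ times row block $i+1$ to row block $i$ --- does put $X+cY$ along the diagonal, but it leaves a block $cX$ just above each diagonal block, and no single admissible operation removes such a block in isolation, because the block immediately to its left is $X+cY$ rather than $X$. What makes the Pascal choice succeed is precisely that $R$ is the inverse of the relevant $s\times s$ part of $L$, so the fill-in produced by the row operations is annihilated exactly by the column operations; recognising this coupling, and packaging it through the splitting $A=X\cdot P+Y\cdot Q$, is the heart of the argument.
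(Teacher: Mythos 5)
Your proof is correct, but it follows a genuinely different route from the paper. The paper argues by induction on $s$: it applies the inductive transformation to the upper-left $sm\times(s-1)n$ corner, observes that this creates fill-in blocks of the form $a_iX$ in the last column block, kills them by adding $-a_i$ times the $i$-th column block to the last one (which converts the fill-in into blocks $b_iY$), and finally clears those using the bottom row block and adjusts the $s$-th diagonal block to $X+cY$; no explicit formula for the operations is ever produced. You instead exhibit the transformation in closed form, $B=(P_c^{(s+1)}\otimes I_m)\,A\,(P_{-c}^{(s)}\otimes I_n)$ with block Pascal matrices, and reduce the lemma to the two scalar identities $P_c^{(s+1)}PP_{-c}^{(s)}=P$ and $P_c^{(s+1)}QP_{-c}^{(s)}=cP+Q$, which you verify correctly (they also follow conceptually by viewing $P$ and $Q$ as the inclusion and multiplication-by-$t$ maps on polynomials of bounded degree, conjugated by the substitution $t\mapsto t+c$); the reduction to elementary operations via factoring a unitriangular matrix into transvections $I+\alpha E_{ij}$, $i<j$, and the identification of left/right transvections with the permitted row/column block operations are both accurate, and everything is characteristic-free since only integer binomial coefficients appear. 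What each approach buys: the paper's induction is shorter, formula-free, and stays entirely within the language of the allowed operations; yours is constructive and global, giving the explicit matrices $L$ and $R$ (and hence an explicit sequence of operations), and your closing remark correctly diagnoses why a naive one-pass elimination cannot work and why the column operations must be the inverse Pascal matrix coupled to the row operations.
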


\begin{proof}
We prove by induction on $s$.
The case where $s=1$ is clear.
Assume $s>1$ and let $B$ be the $sm\times (s-1)n$ matrix of the following form
$$
\left[
\begin{array}{ccccc}
X\\
Y&X\\
&Y&X\\
&&Y&\ddots\\
&&&\ddots&X\\
&&&&Y
\end{array}
\right].
$$
By the induction hypothesis, there is a sequence of row and column operations
of the type stated in the lemma which transforms $B$ to
$$
\left[
\begin{array}{ccccc}
X+cY\\
Y&X+cY\\
&Y&X+cY\\
&&Y&\ddots\\
&&&\ddots&X+cY\\
&&&&Y
\end{array}
\right].
$$
By applying this sequence of row and column operations to the upper left corner of $A$,
one gets
\begin{equation}
\mylabel{eqn:half way}
\left[
\begin{array}{cccccc}
X+cY&&&&&a_1X\\
Y&X+cY&&&&a_2X\\
&Y&X+cY&&&a_3X\\
&&Y&\ddots&&\vdots\\
&&&\ddots&X+cY&a_{s-2}X\\
&&&&Y&X\\
&&&&&Y
\end{array}
\right].
\end{equation}
By adding $-a_i$ times of the $i$-th column block  to the $s$-th column block,
one can transform the matrix \refeq{eqn:half way} to the following form.
$$
\left[
\begin{array}{cccccc}
X+cY&&&&&b_1Y\\
Y&X+cY&&&&b_2Y\\
&Y&X+cY&&&b_3Y\\
&&Y&\ddots&&\vdots\\
&&&\ddots&X+cY&b_{s-2}Y\\
&&&&Y&X+b_{s-1}Y\\
&&&&&Y
\end{array}
\right]
$$
Finally, by adding $-b_i$ times of $(s+1)$-th row block to $i$-th row block for
$i=1$, \ldots, $s-2$ and $-b_{s-1}+c$ times of $(s+1)$-th row block to $s$-th
row block, one gets a desired form.
\end{proof}
As a corollary, we see the following fact.

\begin{cor}
\mylabel{cor:same det}
Set $d=\gcd(m,n)$ and assume that $n=m+d$.
Let $A$  be the $(mn/d)\times (mn/d)$ matrix of the following form.
$$
\left[
\begin{array}{ccccc}
X\\
Y&X\\
&Y&X\\
&&Y&\ddots\\
&&&\ddots&X\\
&&&&Y
\end{array}
\right]
$$
Then the determinant of the following $(mn/d)\times(mn/d)$ matrix 
$$
\left[
\begin{array}{ccccc}
X+cY\\
Y&X+cY\\
&Y&X+cY\\
&&Y&\ddots\\
&&&\ddots&X+cY\\
&&&&Y
\end{array}
\right]
$$
is $\det A$.
\end{cor}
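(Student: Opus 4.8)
The plan is to read this off directly from the preceding Lemma. When $n=m+d$ we have $n/d=m/d+1$, so putting $s=m/d$ the matrix $A$ of the Lemma has $(s+1)m=(n/d)m=mn/d$ rows and $sn=mn/d$ columns; thus $A$ is precisely the square matrix \refeq{eqn:inv det} under consideration, and the Lemma applies with this value of $s$. It therefore supplies a sequence of blockwise operations carrying $A$ into the matrix whose diagonal blocks are $X+cY$ (and whose subdiagonal blocks are still $Y$).

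I would then observe that every operation produced by the Lemma preserves the determinant. Indeed, each one is either ``add a scalar multiple of the $j$-th row block to the $i$-th row block'' with $i<j$, or ``add a scalar multiple of the $i$-th column block to the $j$-th column block'' with $i<j$. The first type is left multiplication by a matrix which is the identity plus a single off-diagonal block in position $(i,j)$ with $i<j$, hence block upper triangular with identity diagonal blocks, so of determinant $1$; equivalently it is a composition of ordinary elementary row operations ``add a scalar multiple of one row to another, distinct row'', each of determinant $1$. The second type is similarly right multiplication by a block upper triangular unipotent matrix. Consequently the determinant is unchanged along the whole sequence, and $\det$ of the $X+cY$ matrix equals $\det A$.

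I do not expect any real obstacle here; the only points needing a word of care are (a) checking that the dimensions force $A$ to be square exactly when $n=m+d$ (which is why the corollary is phrased for this case and in terms of a single determinant), and (b) confirming that the moves listed in the Lemma are genuinely of the determinant-preserving kind --- in particular that no block is ever added to itself and no block is rescaled --- which is immediate from the constraint $i<j$ in the statement of the Lemma. The substantive work has already been carried out in the inductive construction of that Lemma.
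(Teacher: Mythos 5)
Your proposal is correct and follows exactly the route the paper intends: the paper deduces the corollary directly from the preceding lemma, and your argument just makes explicit the routine verifications that the matrix is square when $n=m+d$ (so $s=m/d$ gives an $(mn/d)\times(mn/d)$ matrix) and that each blockwise operation with $i<j$ is multiplication by a block-unipotent triangular matrix of determinant $1$. Nothing is missing; the substantive content is indeed in the lemma, as you say.
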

Since $\slin(2,\KKK)$ is generated by
$\{\begin{pmatrix}1&c\\0&1\end{pmatrix}\mid c\in\KKK\}\cup
\{\begin{pmatrix}1&0\\ c&1\end{pmatrix}\mid c\in\KKK\}$,
by Corollary \ref{cor:same det} and symmetry, we see the following:

\begin{prop}
Set $d=\gcd(m,n)$ and assume that $n=m+d$.
Then the determinant \refeq{eqn:inv det} is
invariant under the action of $\slin(2,\KKK)$.
\end{prop}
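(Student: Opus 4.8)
The plan is to use that $\slin(2,K)$ is generated by the one-parameter unipotent subgroups $U^-=\{\begin{pmatrix}1&0\\c&1\end{pmatrix}\mid c\in K\}$ and $U^+=\{\begin{pmatrix}1&c\\0&1\end{pmatrix}\mid c\in K\}$, as recalled just above the statement, so it suffices to show that the determinant $\Delta$ in \refeq{eqn:inv det} is fixed by every element of $U^-$ and of $U^+$. An element $\begin{pmatrix}1&0\\c&1\end{pmatrix}$ of $U^-$ acts on $K[T]$ by $X\mapsto X+cY$ and $Y\mapsto Y$, hence sends $\Delta=\det A$ to the determinant of the matrix obtained from $A$ by replacing each diagonal block $X$ with $X+cY$; by Corollary~\ref{cor:same det} this determinant equals $\det A=\Delta$. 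So $U^-$-invariance is immediate.

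For $U^+$ I would invoke the symmetry interchanging $X$ and $Y$. Let $\phi\colon K[T]\to K[T]$ be the $K$-algebra automorphism with $\phi(T_{ij1})=T_{ij2}$ and $\phi(T_{ij2})=T_{ij1}$ for all $i,j$, i.e.\ the one interchanging $X$ and $Y$. I first claim $\phi(\Delta)=\pm\Delta$: the matrix $A$ has $n/d$ row blocks of size $m$ and $m/d$ column blocks of size $n$, and reversing the order of the row blocks and of the column blocks is a permutation of the rows and of the columns of $A$, hence changes $\det A$ only by a fixed sign; inspecting the block positions, the matrix so obtained has $Y$ on the block diagonal and $X$ on the block subdiagonal, i.e.\ it is exactly $A$ with $X$ and $Y$ interchanged, and its determinant is therefore $\phi(\Delta)$. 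Hence $\phi(\Delta)=\pm\Delta$. Next, denoting by $\sigma^-_c$ and $\sigma^+_c$ the algebra endomorphisms of $K[T]$ induced by $\begin{pmatrix}1&0\\c&1\end{pmatrix}$ and $\begin{pmatrix}1&c\\0&1\end{pmatrix}$, a one-line check on the generators $T_{ij1},T_{ij2}$ gives $\phi\circ\sigma^+_c\circ\phi=\sigma^-_c$. Therefore
$$
\sigma^+_c(\Delta)=\phi\bigl(\sigma^-_c(\phi(\Delta))\bigr)=\pm\,\phi\bigl(\sigma^-_c(\Delta)\bigr)=\pm\,\phi(\Delta)=\Delta,
$$
where $\sigma^-_c(\Delta)=\Delta$ is the $U^-$-invariance just proved and the sign, appearing twice, cancels. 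This gives $U^+$-invariance, and hence invariance of $\Delta$ under all of $\slin(2,K)$.

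There is no serious obstacle here; the argument reduces entirely to Corollary~\ref{cor:same det} together with the observation that reversing the block orderings realizes the $X\leftrightarrow Y$ swap on $\Delta$. The only point requiring care is the bookkeeping: confirming that after reversing both block orderings the diagonal blocks have indeed become $Y$ and the subdiagonal blocks $X$ (so that the reversed matrix is literally $A$ with $X$ and $Y$ interchanged), and noting that the undetermined sign $\pm$ enters an even number of times and so is harmless. An alternative that bypasses $\phi$ is to reprove, by the row-and-column-operation argument used in the lemma preceding Corollary~\ref{cor:same det} but with the roles of $X$ and $Y$ (and of the row and column blocks) interchanged, that the matrix with $X$ on the diagonal and $cX+Y$ below has determinant $\det A$; I would prefer the symmetry route, since it uses Corollary~\ref{cor:same det} directly.
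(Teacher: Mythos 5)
Your proof is correct and takes essentially the same route as the paper: the paper's own argument is exactly that $\slin(2,\KKK)$ is generated by the two families of unipotent matrices, with invariance under one family given by Corollary \ref{cor:same det} and the other ``by symmetry.'' Your explicit $X\leftrightarrow Y$ swap via block reversal (with the sign cancelling) is just a spelled-out version of that symmetry step, so nothing further is needed.
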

Therefore, we see the following:

\begin{thm}
\mylabel{thm:abs3}
If $m<n$, then $\KKK[T]^{G\times \slin(2,\KKK)}=\KKK[T]^G$.
\end{thm}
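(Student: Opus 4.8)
The plan is to reduce the claim $\KKK[T]^{G\times\slin(2,\KKK)}=\KKK[T]^G$ to two separate cases according to Theorem \ref{thm:m<n}, exploiting that we already know $\KKK[T]^G$ completely when $m<n$. Since $\slin(2,\KKK)$ acts on $\KKK[T]$ and this action commutes with the $G$-action (they act on different tensor factors $K^2$ and $K^m\otimes K^n$), the group $\slin(2,\KKK)$ restricts to an action on the subring $\KKK[T]^G$, and $\KKK[T]^{G\times\slin(2,\KKK)}=(\KKK[T]^G)^{\slin(2,\KKK)}\subseteq\KKK[T]^G$. So the content is the reverse inclusion: every $G$-invariant is already $\slin(2,\KKK)$-invariant.

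The first case is $n\neq m+d$ where $d=\gcd(m,n)$. Here Theorem \ref{thm:m<n}(2) gives $\KKK[T]^G=\KKK$, and $\KKK$ is trivially $\slin(2,\KKK)$-invariant, so the equality is immediate. The second case is $n=m+d$. Here Theorem \ref{thm:m<n}(1) tells us that $\KKK[T]^G=\KKK[\Delta]$ where $\Delta$ is the determinant \refeq{eqn:inv det}. Thus it suffices to show $\Delta$ itself is $\slin(2,\KKK)$-invariant — which is exactly the content of the Proposition stated just above Theorem \ref{thm:abs3}. Since any polynomial ring in one variable over $\KKK$ has the property that $\slin(2,\KKK)$ acting $\KKK$-algebraically and fixing the generator fixes everything, $\KKK[\Delta]^{\slin(2,\KKK)}=\KKK[\Delta]=\KKK[T]^G$, and we are done.

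The only genuine work is the Proposition, and that in turn rests on Corollary \ref{cor:same det}, which has already been established via the blockwise row/column operations Lemma. Concretely: $\slin(2,\KKK)$ is generated by the lower- and upper-triangular unipotent matrices $\begin{pmatrix}1&0\\c&1\end{pmatrix}$ and $\begin{pmatrix}1&c\\0&1\end{pmatrix}$ for $c\in\KKK$, so it is enough to check invariance under each of these. The matrix $\begin{pmatrix}1&c\\0&1\end{pmatrix}$ sends $X\mapsto X$, $Y\mapsto cX+Y$; applying this substitution to $\Delta$ replaces each diagonal block $X$ by $X$ and each subdiagonal block $Y$ by $cX+Y$, and one checks by the transpose of the argument in Corollary \ref{cor:same det} (or by symmetry in the roles of rows/columns and of $X/Y$) that the resulting determinant equals $\det A=\Delta$. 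The matrix $\begin{pmatrix}1&0\\c&1\end{pmatrix}$ sends $X\mapsto X+cY$, $Y\mapsto Y$, and Corollary \ref{cor:same det} directly says the determinant is unchanged. Hence $\Delta$ is fixed by a generating set of $\slin(2,\KKK)$, so by all of $\slin(2,\KKK)$.

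The main obstacle, such as it is, is already cleared in the excerpt by the blockwise-operations Lemma and its Corollary; all that remains in the proof of Theorem \ref{thm:abs3} is to assemble the pieces and invoke Theorem \ref{thm:m<n} to pin down $\KKK[T]^G$ precisely in each case. So I would write the proof as: "If $n\neq m+d$ the claim is trivial by Theorem \ref{thm:m<n}(2); if $n=m+d$ then $\KKK[T]^G$ is generated by \refeq{eqn:inv det} by Theorem \ref{thm:m<n}(1), and \refeq{eqn:inv det} is $\slin(2,\KKK)$-invariant by the preceding Proposition, hence so is all of $\KKK[T]^G$."
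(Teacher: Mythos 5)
Your proposal is correct and follows essentially the same route as the paper: the paper also deduces Theorem \ref{thm:abs3} by combining Theorem \ref{thm:m<n} (which pins down $\KKK[T]^G$ as either $\KKK$ or the ring generated by the determinant \refeq{eqn:inv det}) with the Proposition that this determinant is $\slin(2,\KKK)$-invariant, itself resting on Corollary \ref{cor:same det}, symmetry, and the generation of $\slin(2,\KKK)$ by unipotent matrices. Your explicit remarks on the commuting actions and the trivial case $n\neq m+d$ merely spell out what the paper leaves implicit.
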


Next we consider the case where $m=n$.

Professor Mitsuyasu Hashimoto kindly informed the author that this action of
$\slin(2,\KKK)$ on $\KKK[f_{n,0}$, $f_{n-1,1}$, \ldots, $f_{0,n}]$ is identical with
the one of the classical invariant theory of binary forms.
Here we recall the classical invariant of binary forms
(see \cite[Section 3.6]{stu} and \cite[Section 1.3]{muk}).

From now on we assume that $K=\CCC$, the complex number field.
Let $x$ and $y$   be indeterminates.
For $\begin{pmatrix}\alpha&\beta\\ \gamma&\delta\end{pmatrix}\in \slin(2,\CCC)$ 
we define $\bar x$ and $\bar y$ by
$$
\begin{pmatrix}x\\ y\end{pmatrix}=
\begin{pmatrix}\alpha&\beta\\ \gamma&\delta\end{pmatrix}
\begin{pmatrix}\bar x\\ \bar y\end{pmatrix}.
$$
Now let $\xi_0$, $\xi_1$, \ldots, $\xi_n$ be $n+1$ new indeterminates 
and let $f$ be the following form of $x$ and $y$ of degree $n$.
$$
f=\sum_{k=0}^n{n\choose k}\xi_k x^k y^{n-k}
$$
One can rewrite $f$ by using $\bar x$ and $\bar y$
by substituting $x=\alpha \bar x+\beta\bar y$ and $y=\gamma\bar x+\delta \bar y$.
$$
f=\sum_{k=0}^n{n\choose k}\xi_k(\alpha \bar x+\beta \bar y)^k (\gamma \bar x+\delta \bar y)^{n-k}
=\sum_{k=0}^n{n\choose k}\bar\xi_k \bar x^k \bar y^{n-k}
$$

\begin{definition}\rm
The action of $\slin(2,\CCC)$ on $\CCC[\xi_0$, $\xi_1$, \ldots, $\xi_n$, $x$, $y]$
is defined so that 
$\begin{pmatrix}\alpha&\beta\\ \gamma&\delta\end{pmatrix}\in \slin(2,\CCC)$ 
maps $x$ to $\bar x$, $y$ to $\bar y$ and $\xi_k$ to $\bar \xi_k$
for $k=0$, $1$, \ldots, $n$.
An element of 
$\CCC[\xi_0$, \ldots, $\xi_n$, $x$, $y]^{\slin(2,\CCC)}$ is called a covariant and
an element of 
$\CCC[\xi_0$, \ldots, $\xi_n]^{\slin(2,\CCC)}$ is called an invariant.
\end{definition}

Recall that under the action of $\slin(2,\CCC)$ on $\CCC[T]$,
$\begin{pmatrix}\alpha&\beta\\ \gamma&\delta\end{pmatrix}\in \slin(2,\CCC)$ 
maps $X$ to $\alpha X+\gamma Y$ and $Y$ to $\beta X +\delta Y$.
Set $\bar X=\alpha X+\gamma Y$ and $\bar Y=\beta X+\delta Y$.
Then since 
$\begin{pmatrix} \bar x\\ \bar y\end{pmatrix}=
\begin{pmatrix}\alpha & \beta\\ \gamma & \delta\end{pmatrix}^{-1}
\begin{pmatrix} x\\  y\end{pmatrix}$,
we see that
$$
\bar x\bar X+\bar y\bar Y=xX+yY.
$$

Set $\bar X=[\bar \xxx_1$, \ldots, $\bar \xxx_n]$ and 
$\bar Y=[\bar \yyy_1$, \ldots, $\bar \yyy_n]$
and for $\SSS\subset[n]$, set
$$
\bar \zzz_j^\SSS\define
\left\{\begin{array}{ll}
\bar \xxx_j\quad&(j\in\SSS)\\
\bar \yyy_j\quad&(j\not\in\SSS).
\end{array} \right.
$$
Also set
$$
\bar f_{k, n-k}=
\sum_{\SSS\subset[n], |\SSS|=k}
\det[\bar \zzz_1^\SSS,\ldots,\bar \zzz_n^\SSS].
$$
Since by
$\begin{pmatrix}\alpha&\beta\\ \gamma&\delta\end{pmatrix}\in \slin(2,\CCC)$,
$\xxx_k$ and $\yyy_k$ are mapped to $\bar\xxx_k$ and $\bar \yyy_k$ respectively
for $k=0$, \ldots, $n$, we see, by Remark \ref{rem:lm f}, the following:

\begin{lemma}
$f_{k,n-k}$ is mapped to $\bar f_{k,n-k}$
by the action of
$\begin{pmatrix}\alpha&\beta\\ \gamma&\delta\end{pmatrix}\in\slin(2,\CCC)$.
\end{lemma}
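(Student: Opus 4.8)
The plan is to treat this as a formal consequence of the column-expansion formula for $f_{k,n-k}$ recorded in Remark \ref{rem:lm f}, combined with the fact that the $\slin(2,\CCC)$-action on $\CCC[T]$ is by $\CCC$-algebra homomorphisms.

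First I would recall from Remark \ref{rem:lm f} that
$$
f_{k,n-k}=\sum_{\SSS\subset[n],\,|\SSS|=k}\det[\zzz_1^\SSS,\ldots,\zzz_n^\SSS],
$$
where $\zzz_j^\SSS=\xxx_j$ if $j\in\SSS$ and $\zzz_j^\SSS=\yyy_j$ otherwise, while $\bar f_{k,n-k}$ is by definition the same sum with each column $\zzz_j^\SSS$ replaced by $\bar\zzz_j^\SSS$. Next I would spell out the action on columns: writing $\sigma$ for the action of $\begin{pmatrix}\alpha&\beta\\ \gamma&\delta\end{pmatrix}\in\slin(2,\CCC)$, the map $\sigma$ is a $\CCC$-algebra homomorphism with $\sigma(X)=\bar X=\alpha X+\gamma Y$ and $\sigma(Y)=\bar Y=\beta X+\delta Y$, where $\sigma(X)$ means the matrix obtained by applying $\sigma$ entrywise. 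Reading this column by column gives $\sigma(\xxx_j)=\alpha\xxx_j+\gamma\yyy_j=\bar\xxx_j$ and $\sigma(\yyy_j)=\beta\xxx_j+\delta\yyy_j=\bar\yyy_j$ for every $j$, so for each $j$ the column $\zzz_j^\SSS\in\{\xxx_j,\yyy_j\}$ is sent by $\sigma$ to the corresponding $\bar\zzz_j^\SSS\in\{\bar\xxx_j,\bar\yyy_j\}$.

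Finally, since the determinant of an $n\times n$ matrix is a polynomial in its entries and $\sigma$ is a ring homomorphism, $\sigma$ commutes with the formation of these determinants, hence $\sigma\bigl(\det[\zzz_1^\SSS,\ldots,\zzz_n^\SSS]\bigr)=\det[\bar\zzz_1^\SSS,\ldots,\bar\zzz_n^\SSS]$. Summing over all $\SSS\subset[n]$ with $|\SSS|=k$ and using additivity of $\sigma$ then yields $\sigma(f_{k,n-k})=\sum_{\SSS}\det[\bar\zzz_1^\SSS,\ldots,\bar\zzz_n^\SSS]=\bar f_{k,n-k}$, which is the assertion.

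There is essentially no genuine obstacle here; the argument is bookkeeping. The only point deserving a moment's care is the translation of the matrix-level identity $\sigma(X)=\bar X$, $\sigma(Y)=\bar Y$ into the column-wise statements $\sigma(\xxx_j)=\bar\xxx_j$, $\sigma(\yyy_j)=\bar\yyy_j$, together with the (immediate) observation that an algebra homomorphism commutes with the polynomial expression $\det$; once these are in place, the conclusion follows by linearity from the formula of Remark \ref{rem:lm f}.
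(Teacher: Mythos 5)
Your proposal is correct and follows essentially the same route as the paper, which derives the lemma directly from the column expansion of $f_{k,n-k}$ in Remark \ref{rem:lm f} together with the observation that the action sends each column $\xxx_j$ to $\bar\xxx_j$ and $\yyy_j$ to $\bar\yyy_j$; your write-up merely makes explicit the (immediate) facts that the action is an algebra homomorphism and hence commutes with taking determinants and sums.
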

On the other hand, by expanding 
$\det(xX+yY)$ and 
$\det(\bar x\bar X+\bar y\bar Y)$, one obtains 
$$
\sum_{k=0}^n f_{k,n-k}  x^k  y^{n-k}=
\det (xX+yY)=
\det (\bar x\bar X+\bar y\bar Y)=
\sum_{k=0}^n\bar f_{k,n-k} \bar x^k \bar y^{n-k}.
$$
Therefore, we see the following:

\begin{thm}
\mylabel{thm:id to cl}
$\CCC[f_{n,0}$, $f_{n-1,1}$, \ldots, $f_{0,n}]$ is isomorphic to
$\CCC[\xi_0$, $\xi_1$, \ldots, $\xi_n]$ as $\slin(2,\CCC)$-modules
by the map
$f_{k,n-k}\mapsto {n\choose k}\xi_k$ 
for $k=0$, $1$, \ldots, $n$.
\end{thm}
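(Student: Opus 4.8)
The plan is to set up an explicit $\CCC$-linear isomorphism $\Phi\colon \CCC[\xi_0,\dots,\xi_n]\to\CCC[f_{n,0},\dots,f_{0,n}]$ by $\xi_k\mapsto \binom{n}{k}^{-1}f_{k,n-k}$ and to check that it commutes with the $\slin(2,\CCC)$-action on both sides. Since $f_{n,0},\dots,f_{0,n}$ are algebraically independent over $\CCC$ by Remark \ref{rem:lm f}, and $\xi_0,\dots,\xi_n$ are independent indeterminates, $\Phi$ is certainly a $\CCC$-algebra isomorphism; the only content is $\slin(2,\CCC)$-equivariance. It suffices to check equivariance on the generators $\xi_k$, i.e.\ on the degree-one graded piece $\bigoplus_k\CCC\xi_k$, since the action on both rings is by $\CCC$-algebra automorphisms.

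The key device is the pair of generating-function identities already assembled in the excerpt. On the ``classical'' side, the definition of the action via $f=\sum_k\binom{n}{k}\xi_k x^k y^{n-k}=\sum_k\binom{n}{k}\bar\xi_k\bar x^k\bar y^{n-k}$ says precisely that, for $g=\begin{pmatrix}\alpha&\beta\\\gamma&\delta\end{pmatrix}\in\slin(2,\CCC)$, the image $g\cdot\xi_k$ is $\bar\xi_k$, the coefficient of $\binom{n}{k}\bar x^k\bar y^{n-k}$ after substituting $x=\alpha\bar x+\beta\bar y$, $y=\gamma\bar x+\delta\bar y$ into $\sum_k\binom{n}{k}\xi_k x^k y^{n-k}$. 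On the tensor side, the identity $\sum_k f_{k,n-k}x^k y^{n-k}=\det(xX+yY)=\det(\bar x\bar X+\bar y\bar Y)=\sum_k\bar f_{k,n-k}\bar x^k\bar y^{n-k}$ (using $\bar x\bar X+\bar y\bar Y=xX+yY$) together with the Lemma stating $g\cdot f_{k,n-k}=\bar f_{k,n-k}$ plays the exactly parallel role. So I would carry out the following steps in order: (1) write $F(x,y):=\sum_k\binom{n}{k}\xi_k x^k y^{n-k}$ and $D(x,y):=\sum_k f_{k,n-k}x^k y^{n-k}=\det(xX+yY)$; (2) observe that under $\Phi$ (extended to $\CCC[\xi_0,\dots,\xi_n,x,y]\to\CCC[T,x,y]$ fixing $x,y$) we have $\Phi(F(x,y))=D(x,y)$, directly from $\Phi(\binom{n}{k}\xi_k)=f_{k,n-k}$; (3) apply $g$ to both sides of $\Phi(F)=D$: on the left, $g\cdot F(x,y)=F(\bar x,\bar y)$ written back in the $x,y$ monomial basis has coefficients $\binom{n}{k}(g\cdot\xi_k)$, and on the right $g\cdot D(x,y)=\det(\bar x\bar X+\bar y\bar Y)$ has, by the cited Lemma and the substitution identity, coefficients $f_{k,n-k}$ expressed again in the $x,y$-basis after the same linear change — i.e.\ $g\cdot D(x,y)=\sum_k\bar f_{k,n-k}\bar x^k\bar y^{n-k}$ with $\bar f_{k,n-k}=g\cdot f_{k,n-k}$; (4) since the monomials $x^k y^{n-k}$ are a $\CCC[T]$-basis of the degree-$n$ part, comparing coefficients of $x^k y^{n-k}$ on the two sides of $g\cdot\Phi(F)=\Phi(g\cdot F)$ versus $\Phi(g\cdot F)=g\cdot D$ yields $\binom{n}{k}\Phi(g\cdot\xi_k)=g\cdot f_{k,n-k}=g\cdot\Phi(\binom{n}{k}\xi_k)$, which is the desired equivariance $\Phi(g\cdot\xi_k)=g\cdot\Phi(\xi_k)$.

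The only genuinely delicate point — and what I expect to be the main obstacle — is bookkeeping the two different linear substitutions so they genuinely match: the classical action substitutes $x=\alpha\bar x+\beta\bar y,\ y=\gamma\bar x+\delta\bar y$ (i.e.\ the matrix $\begin{pmatrix}\alpha&\beta\\\gamma&\delta\end{pmatrix}$ acting on the column $(x,y)$), whereas the tensor action sends $X\mapsto\alpha X+\gamma Y$, $Y\mapsto\beta X+\delta Y$ (i.e.\ $\begin{pmatrix}\alpha&\beta\\\gamma&\delta\end{pmatrix}^{\top}$ on the column $(X,Y)$), and these are reconciled exactly by the contragredient relation $\begin{pmatrix}\bar x\\\bar y\end{pmatrix}=\begin{pmatrix}\alpha&\beta\\\gamma&\delta\end{pmatrix}^{-1}\begin{pmatrix}x\\y\end{pmatrix}$ recorded just before the Lemma, which gives $\bar x\bar X+\bar y\bar Y=xX+yY$. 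Once that pairing identity is in hand, both $D(x,y)=\det(xX+yY)$ and its transform $\det(\bar x\bar X+\bar y\bar Y)$ are literally the same polynomial in $\CCC[T,x,y]$, and the coefficient comparison in step (4) is immediate. Thus the proof reduces to assembling the already-proved Lemmas and the two expansion identities; no new computation beyond matching these substitutions is required.
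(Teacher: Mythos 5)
Your proposal is correct and follows essentially the same route as the paper: the paper's argument is exactly the assembly of the identity $\bar x\bar X+\bar y\bar Y=xX+yY$, the lemma that $g$ sends $f_{k,n-k}$ to $\bar f_{k,n-k}$, and the expansion $\sum_k f_{k,n-k}x^ky^{n-k}=\det(xX+yY)=\det(\bar x\bar X+\bar y\bar Y)=\sum_k\bar f_{k,n-k}\bar x^k\bar y^{n-k}$, followed by the same coefficient comparison against the classical relation $\sum_k\binom{n}{k}\xi_kx^ky^{n-k}=\sum_k\binom{n}{k}\bar\xi_k\bar x^k\bar y^{n-k}$ (with algebraic independence from Remark \ref{rem:lm f} guaranteeing the map is an isomorphism). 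Your bookkeeping of the two substitutions and the contragredient relation matches the paper's treatment, so no gap remains.
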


The theory of
classical invariants of binary forms 
dates back to nineteenth century.
And is still a theme of research in progress.
See \cite{sf}, \cite{hil}, \cite{dol}, \cite{shi}, \cite{dl}, \cite{bp1} and \cite{bp2}. 
And by using these results, we obtain the information of the structure
of $\CCC[T]^{G\times \slin(2,\CCC)}$.

\begin{example}
\rm
\mylabel{ex:inv}
\begin{enumerate}
\item
The ring of invariants of 
a binary quadric is generated by 
$\xi_1^2-\xi_0\xi_2$ over $\CCC$.
Therefore, if $m=n=2$, then $\CCC[T]^{G\times\slin(2,\CCC)}$ is generated by
$f_{11}^2-4f_{02}f_{20}$
over $\CCC$.
\item
The ring of invariants of a binary cubic is generated by
$3\xi_1^2\xi_2^2-4\xi_0\xi_2^3-4\xi_1^3\xi_3+6\xi_0\xi_1\xi_2\xi_3-\xi_0^2\xi_3^2$
over $\CCC$.
Therefore, if $m=n=3$, then $\CCC[T]^{G\times \slin(2,\CCC)}$ is generated by
$f_{12}^2 f_{21}^2-4f_{03}f_{12}^3-4f_{12}^3f_{30}+18f_{03}f_{12}f_{21}f_{30}-27f_{03}^2f_{30}^2$
over $\CCC$.
\item
\mylabel{item:n=4}
The ring of invariants of a binary quartic is generated by
$3\xi_2^2-4\xi_1\xi_3+\xi_0\xi_4$ and
$\xi_2^3+\xi_0\xi_3^2+\xi_1^2\xi_4-2\xi_1\xi_2\xi_3-\xi_0\xi_2\xi_4$ over $\CCC$.
Therefore, if $m=n=4$, then $\CCC[T]^{G\times \slin(2,\CCC)}$ is generated by
$f_{22}^2-3f_{13}f_{31}+12f_{04}f_{40}$ and
$2f_{22}^3+27f_{04}f_{31}^2+27f_{13}^2f_{40}-9f_{13}f_{22}f_{31}-72f_{04}f_{22}f_{40}$
over $\CCC$.
\end{enumerate}
\end{example}

\section{Hyperdeterminant}
\mylabel{sec:hyper}

In this section, we make a brief remark on the relation
to hyperdeterminants defined by
Gelfand, Kapranov and Zelvinsky \cite{gkz1} (see also \cite{gkz2}).
First we recall the definition of the hyperdeterminant.

\begin{definition}[{\cite{gkz1}}]
\rm
Let $l_1$, $l_2$, \ldots, $l_d$ be  positive integers
and let $V$ be the image of the Segre embedding 
$\PPP_\CCC^{l_1}\times\cdots\times \PPP_\CCC^{l_d}
\to\PPP_\CCC^{(l_1+1)\cdots (l_d+1)-1}$.
If the projective dual variety $V^\vee$ is a hypersurface,
then the defining polynomial of $V^\vee$ is called the
hyperdeterminant of format $l_1$, \ldots, $l_d$.
\end{definition}

\begin{remark}
\rm
$V^\vee$ is a variety in the dual space $(\PPP_\CCC^{(l_1+1)\cdots(l_d+1)-1})^\ast$.
Therefore, if $V_1$, \ldots, $V_d$ are $\CCC$-vector spaces of dimension
$l_1+1$, \ldots, $l_d+1$ respectively, then $V^\vee$ is a subvariety
of $\PPP(V_1^\ast\otimes\cdots\otimes V_d^\ast)$.
In particular, if $V^\vee$ is a hypersurface, then the defining polynomial
of $V^\vee$ is an element of $\sym(V_1\otimes\cdots\otimes V_d)$.
Therefore, in our terminology, a hyperdeterminant of format
$l_1$, \ldots, $l_d$ is a polynomial of the entries of
$(l_1+1)\times \cdots\times (l_d+1)$-tensor of indeterminates.
\end{remark}

Now we recall some results of \cite{gkz1}.

\begin{thm}[{\cite[Theorem 1.3]{gkz1}}]
\mylabel{thm:gkz1.3}
The hyperdeterminant of format $l_1$, \ldots, $l_d$
exists if and only if 
$$
l_k\leq \sum_{j\neq k} l_j
$$
for any $k=1$, \ldots, $d$.
\end{thm}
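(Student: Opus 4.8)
The plan is to rephrase the assertion as a statement about the dimension of a dual variety and to settle it by a dimension count on the associated conormal variety. Write $V_1,\dots,V_d$ for $\CCC$-vector spaces with $\dim V_k=l_k+1$, set $N=\prod_{k=1}^d(l_k+1)-1$, and let $V\subseteq\PPP(V_1\otimes\cdots\otimes V_d)=\PPP_\CCC^N$ be the Segre variety, which is smooth, irreducible and nondegenerate of dimension $\sum_k l_k$. By definition the hyperdeterminant of format $l_1,\dots,l_d$ exists if and only if the dual variety $V^\vee\subseteq(\PPP_\CCC^N)^\ast$ is a hypersurface. Consider the conormal variety $\mathcal Z\subseteq\PPP_\CCC^N\times(\PPP_\CCC^N)^\ast$ of pairs $(p,H)$ with $p\in V$ and $H$ tangent to $V$ at $p$; since $V$ is smooth, $\mathcal Z$ is the projectivised conormal bundle of $V$, hence irreducible of dimension $N-1$, and $V^\vee$ is its image under the second projection $\pi$. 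Thus $V^\vee$ is a hypersurface if and only if $\pi$ is generically finite, i.e. if and only if the general fibre of $\pi$ is $0$-dimensional. Identifying $(\PPP_\CCC^N)^\ast$ with $\PPP(V_1^\ast\otimes\cdots\otimes V_d^\ast)$ and viewing $A\in V_1^\ast\otimes\cdots\otimes V_d^\ast$ as a multilinear form, the hyperplane $H_A$ is tangent to $V$ at $[v_1\otimes\cdots\otimes v_d]$ precisely when, for every $k$, the contraction of $A$ against all $v_j$ with $j\neq k$ is the zero functional in $V_k^\ast$; this is the condition I analyse.

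For the \emph{only if} direction I prove the contrapositive. After renumbering, suppose $l_d>l_1+\cdots+l_{d-1}$; I claim every fibre of $\pi$ has positive dimension. Fix $(H_A,[v_1\otimes\cdots\otimes v_d])\in\mathcal Z$, and for $j<d$ let $\beta_j\colon V_d\to V_j^\ast$ be the contraction of $A$ against the $v_i$ with $i\notin\{j,d\}$, viewed as a map from the $d$-th tensor slot to the dual of the $j$-th. Tangency at $k=j$ says $\beta_j(v_d)=0$, while tangency at $k=d$ says $\langle\beta_j(w),v_j\rangle=0$ for all $w\in V_d$, i.e. $\mathrm{im}\,\beta_j\subseteq\{v_j\}^\perp$, a hyperplane of $V_j^\ast$; hence $\rank\beta_j\le l_j$. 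By subadditivity of codimension, $U\define\bigcap_{j<d}\ker\beta_j$ satisfies $\dim U\ge(l_d+1)-\sum_{j<d}l_j\ge2$. Since the condition at $k=d$ does not involve $v_d$ and the condition at each $k=j<d$ is exactly $v_d\in\ker\beta_j$, the hyperplane $H_A$ stays tangent to $V$ at $[v_1\otimes\cdots\otimes v_{d-1}\otimes v']$ for every $v'\in U\setminus\{0\}$. Hence the tangency locus of $H_A$ contains a $\PPP(U)$ of dimension $\ge1$, so $\dim V^\vee\le\dim\mathcal Z-1=N-2$ and the hyperdeterminant does not exist.

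For the \emph{if} direction, assume $l_k\le\sum_{j\neq k}l_j$ for all $k$. Because $\pi$ is projective, the locus of $A$ with $\dim\pi^{-1}(A)\ge1$ is closed, so it suffices to produce one tensor $A_0$ whose hyperplane is tangent to $V$ at only finitely many points: then the general fibre of $\pi$ is $0$-dimensional, whence $\dim V^\vee=N-1$. The construction proceeds in two steps. First one reduces to the boundary format $l_d=l_1+\cdots+l_{d-1}$; the polygon inequalities permit passage between formats so that non-defectivity in the boundary case forces it in the given one (this is the inductive part of the argument of \cite{gkz1}). Second, in the boundary case one takes $A_0$ to be an explicit block/Kronecker tensor extending the ordinary determinant --- indeed the $G$-invariant determinant \refeq{eqn:inv det} of the present paper is exactly such a tensor when $n=m+\gcd(m,n)$ --- and one checks, by a direct computation of the embedded tangent spaces to $V$ (equivalently, that the second fundamental form of the Segre variety at a general point is nondegenerate on the relevant subspace), that the hyperplane of $A_0$ is singular along $V$ at only finitely many points. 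This gives $\dim V^\vee=N-1$, completing the proof.

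The \emph{only if} half is the short dimension count above and presents no real difficulty. The main obstacle is the \emph{if} half: both organising the inductive reduction to boundary format and, above all, exhibiting the tensor $A_0$ and verifying that its singular locus on $V$ is zero-dimensional --- a piece of multilinear-algebra bookkeeping that is the technical heart of the theorem.
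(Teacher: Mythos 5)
The paper itself offers no proof of this statement: it is quoted directly from \cite[Theorem 1.3]{gkz1}, so there is nothing internal to compare with and your proposal must stand on its own. The \emph{only if} half of your argument is correct and complete. The translation of tangency of $H_A$ at $[v_1\otimes\cdots\otimes v_d]$ into the vanishing of the $d$ partial contractions is the right one, the bound $\rank\beta_j\le l_j$ does follow from $\mathrm{im}\,\beta_j\subseteq\{v_j\}^\perp$, and the estimate $\dim\bigcap_{j<d}\ker\beta_j\ge (l_d+1)-\sum_{j<d}l_j\ge 2$ really does place a $\PPP(U)$ of dimension at least one inside every fibre of $\pi$ over the image, so $\dim V^\vee\le\dim\mathcal{Z}-1=N-2$ and no hyperdeterminant exists. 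That half is fine as written.

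The \emph{if} half, however, is a plan rather than a proof, and the gap is precisely the part you defer. Two essential steps are only asserted: (i) the reduction to the boundary format $l_d=\sum_{j<d}l_j$ --- ``the polygon inequalities permit passage between formats'' is itself a nontrivial statement about dual varieties of Segre products with projective spaces and needs its own argument, which you do not give; and (ii) the existence of a tensor $A_0$ whose hyperplane is tangent to the Segre variety at only finitely many (and at least one) points, together with the verification that its singular locus on $V$ is zero-dimensional --- this is the technical heart of the theorem and is nowhere carried out. Moreover, the parenthetical claim that the determinant \refeq{eqn:inv det} of the present paper ``is exactly such a tensor'' is a category error: \refeq{eqn:inv det} is a polynomial in the entries of the tensor of indeterminates (indeed, by the paper's final Remark it \emph{is} the hyperdeterminant of format $m-1$, $m$, $1$ when $n=m+1$), whereas what your argument requires is a single concrete point $A_0\in V_1^\ast\otimes\cdots\otimes V_d^\ast$ with an isolated singular point on $V$; conflating the invariant polynomial with a candidate tangent hyperplane does not produce the needed witness. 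A minor additional point: for the semicontinuity step you must ensure the finite fibre is nonempty, i.e. $A_0\in V^\vee$, which your phrasing ``tangent at only finitely many points'' does not by itself guarantee. As it stands, only one implication of the stated equivalence is proved.
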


\begin{prop}[{c.f. \cite[Proposition 1.4]{gkz1}}]
\mylabel{prop:gkz1.4}
The hyperdeterminant of format $l_1$, \ldots, $l_d$
is invariant under the action
of $\slin(l_1+1,\CCC)\times\cdots\times\slin(l_d+1,\CCC)$.
\end{prop}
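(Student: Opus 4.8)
The plan is to deduce $\slin$-invariance of the hyperdeterminant from the coordinate-free description of $V^\vee$ recalled in the Remark following the definition. Fix $\CCC$-vector spaces $V_1,\dots,V_d$ of dimensions $l_1+1,\dots,l_d+1$, and let $V\subset\PPP(V_1\otimes\cdots\otimes V_d)$ be the Segre variety of decomposable tensors. The group $\Gamma\define\slin(l_1+1,\CCC)\times\cdots\times\slin(l_d+1,\CCC)$ acts on each $V_k$, hence diagonally on $V_1\otimes\cdots\otimes V_d$, and this action is by linear automorphisms that send decomposable tensors to decomposable tensors; therefore $\Gamma$ preserves $V$ as a subvariety of $\PPP(V_1\otimes\cdots\otimes V_d)$. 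Passing to dual spaces, $\Gamma$ acts on $V_1^\ast\otimes\cdots\otimes V_d^\ast$ via the contragredient (inverse-transpose) representations — note that $\slin$ is closed under inverse-transpose — and the projective dual $V^\vee\subset\PPP(V_1^\ast\otimes\cdots\otimes V_d^\ast)$ is, by its very construction (hyperplanes tangent to $V$), carried to itself by this action. This is the geometric heart of the argument.

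Next I would translate this into a statement about the defining polynomial. Assume the format $l_1,\dots,l_d$ is such that $V^\vee$ is a hypersurface (Theorem \ref{thm:gkz1.3}), and let $\Delta\in\sym(V_1\otimes\cdots\otimes V_d)$ be its defining polynomial, well defined up to a nonzero scalar. For $g\in\Gamma$, the polynomial $g\cdot\Delta$ vanishes exactly on $g(V^\vee)=V^\vee$, so $g\cdot\Delta$ is another defining polynomial of the same irreducible hypersurface of the same degree; hence $g\cdot\Delta=\chi(g)\,\Delta$ for some $\chi(g)\in\CCC^\times$. The map $\chi\colon\Gamma\to\CCC^\times$ is a group homomorphism, i.e.\ a character of $\Gamma$. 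It remains to show $\chi$ is trivial. Since each factor $\slin(l_k+1,\CCC)$ equals its own commutator subgroup (for $l_k+1\geq 2$, $\slin$ is perfect), $\Gamma$ is perfect, and any homomorphism from a perfect group to the abelian group $\CCC^\times$ is trivial. Therefore $g\cdot\Delta=\Delta$ for all $g\in\Gamma$, which is the assertion.

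I expect the main obstacle to be bookkeeping about \emph{which} representation of $\Gamma$ acts on the tensor space containing $V^\vee$: one must be careful that $V^\vee$ lives in the dual space and that $\slin$ acts there by inverse-transpose, and that this action still has the Segre variety's dual as an invariant set. (Equivalently, one can avoid dualizing by recalling that $V^\vee$ is defined via the incidence correspondence $\{(x,H):x\in V,\ H\supset T_xV\}$ and that $\Gamma$, acting compatibly on points and on hyperplanes, preserves this correspondence.) A secondary point worth a sentence is the appeal to perfectness of $\slin(l_k+1,\CCC)$; since the hyperdeterminant only exists when $l_k\leq\sum_{j\neq k}l_j$ for all $k$, in any nontrivial case at least two of the $l_j$ are positive, so all relevant factors have $l_k+1\geq 2$ and the perfectness input is legitimate. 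Everything else — that linear automorphisms act on $\sym$ of the tensor space, that they permute defining polynomials of a fixed hypersurface of given degree among scalar multiples of one another — is routine.
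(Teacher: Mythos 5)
Your argument is correct, but note that the paper itself gives no proof of this proposition: it is recalled verbatim (``c.f.'') from \cite[Proposition 1.4]{gkz1}, where essentially your argument appears --- the group $\slin(l_1+1,\CCC)\times\cdots\times\slin(l_d+1,\CCC)$ preserves the Segre variety, hence its projective dual, so the defining polynomial of the dual hypersurface is a semi-invariant, and the character is trivial because each $\slin(l_k+1,\CCC)$ has no nontrivial characters. So your proposal matches the standard proof of the cited result; the careful points you flag (the contragredient action on the dual space, and triviality of characters via perfectness of $\slin$) are exactly the ones that need saying, and they are handled correctly.
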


In order to state the next proposition, we make the 
following:

\begin{definition}
\rm
Let $T$ be an $(l_1+1)\times\cdots\times(l_d+1)$-tensor
of indeterminates.
For a monomial $g$ of $K[T]$, we define the index support
$\isupp(g)$ of $g$ as follows:
$$
\isupp(g)\define\{(i_1,\ldots, i_d)\mid T_{i_1,\ldots, i_d}\in\supp(g)\}.
$$
\end{definition}

\begin{prop}[{c.f. \cite[Proposition 1.8]{gkz1}}]
\mylabel{prop:gkz1.8}
Let $P$ be a homogeneous element of $K[T]$
which is invariant under the action of
$\slin(l_1+1,\CCC)\times\cdots\times\slin(l_d+1,\CCC)$.
Then $P$ is divisible by the hyperdeterminant 
of format $l_1$, \ldots, $l_d$ if and only if
for any monomial $g$ of $P$ and any
$(j_1$, \ldots, $j_d)\in[l_1+1]\times\cdots\times[l_d+1]$,
there exists $(i_1$, \ldots, $i_d)\in\isupp(g)$ such that
$|\{k\mid i_k\neq j_k\}|\leq 1$.
\end{prop}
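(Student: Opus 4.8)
The plan is to recast divisibility by the hyperdeterminant as a vanishing condition on the dual variety, and then to use the $\slin$-invariance of $P$ to pin that condition down on one coordinate subspace, where it becomes exactly the stated condition on monomials. Throughout I take $K=\CCC$ (which the hyperdeterminant requires in any case) and assume the format $l_1,\ldots,l_d$ satisfies the inequalities of Theorem \ref{thm:gkz1.3}, so that the hyperdeterminant $\Delta$ of this format exists; write $V\subset\PPP(V_1\otimes\cdots\otimes V_d)$ for the Segre variety, so $V^\vee=Z(\Delta)$. The first step is the standard reduction: $V$ is irreducible and smooth, so its projectivized conormal variety is irreducible, hence so is its image $V^\vee$ in the dual projective space; since we are assuming $V^\vee$ is a hypersurface, its homogeneous ideal is a height-one prime, hence principal (the homogeneous coordinate ring being a polynomial ring, a UFD), and its irreducible generator is $\Delta$ up to a nonzero scalar. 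By the Nullstellensatz, for a homogeneous polynomial $P$ we then have $\Delta\mid P$ if and only if $P$ vanishes on $V^\vee$. (These facts about dual varieties are classical; see \cite{gkz1}.) So it suffices to characterise, for a homogeneous $\slin(l_1+1,\CCC)\times\cdots\times\slin(l_d+1,\CCC)$-invariant $P$, when $P$ vanishes on $V^\vee$.

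Next I would make $V^\vee$ explicit. For an index $(j_1,\ldots,j_d)\in[l_1+1]\times\cdots\times[l_d+1]$ let $p_j\in V$ be the image of $([\eee^{(1)}_{j_1}],\ldots,[\eee^{(d)}_{j_d}])$; the affine tangent space of the cone over $V$ at its lift $\eee^{(1)}_{j_1}\otimes\cdots\otimes\eee^{(d)}_{j_d}$ is spanned by the basis tensors $\eee^{(1)}_{i_1}\otimes\cdots\otimes\eee^{(d)}_{i_d}$ with $\card{\{k\mid i_k\neq j_k\}}\leq 1$, so a hyperplane $A$ contains this tangent space exactly when the coordinate $A_{i_1,\ldots,i_d}$ vanishes for every such index; call this coordinate linear subspace $L_j$. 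Because $V$ is smooth and $G\define\glin(l_1+1,\CCC)\times\cdots\times\glin(l_d+1,\CCC)$ acts transitively on $V=\PPP(V_1)\times\cdots\times\PPP(V_d)$, the image of the projectivized conormal variety of $V$ in the dual space (which is $V^\vee$, and is closed since that conormal variety is projective) equals $G\cdot\PPP(L_j)$ for any single $j$. In particular $\PPP(L_j)\subset V^\vee$ for every $j$, the permutation matrices in $G$ permute the subspaces $L_j$ among themselves, and $V^\vee$ is a single $G$-orbit of coordinate subspaces.

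Then I would put the two together. For the ``only if'' direction: if $\Delta\mid P$ then $P$ vanishes on $V^\vee$, hence on each $\PPP(L_j)$; since $L_j$ is cut out by the variables $T_{i_1,\ldots,i_d}$ with $\card{\{k\mid i_k\neq j_k\}}\leq 1$ and $P$ is homogeneous, $P$ lies in the ideal generated by those variables, that is, every monomial $g$ of $P$ is divisible by one of them, which is precisely the assertion that for every monomial $g$ of $P$ and every $j$ there is $(i_1,\ldots,i_d)\in\isupp(g)$ with $\card{\{k\mid i_k\neq j_k\}}\leq 1$. For the ``if'' direction, the hypothesis applied to the single index $(1,\ldots,1)$ already shows $P$ lies in the ideal generated by the variables $T_{i_1,\ldots,i_d}$ with $\card{\{k\mid i_k\neq 1\}}\leq 1$, i.e.\ $P$ vanishes on $\PPP(L_{(1,\ldots,1)})$. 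Now, being homogeneous and $\slin^{\times d}$-invariant, $P$ is a $G$-semi-invariant, $g\cdot P=\chi(g)\,P$ for a character $\chi$ of $G$: this uses that each variable $T_{i_1,\ldots,i_d}$ is multilinear of multidegree $(1,\ldots,1)$, so a homogeneous $P$ has equal degree in all $d$ tensor directions, together with normality of $\slin^{\times d}$ in $G$ (and along the way one sees that $P=0$ unless $l_k+1$ divides $\deg P$ for every $k$). Hence $P$ vanishes on $g\cdot\PPP(L_{(1,\ldots,1)})$ for every $g\in G$, so on $V^\vee=G\cdot\PPP(L_{(1,\ldots,1)})$, so $\Delta\mid P$.

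I expect the main obstacle to be the clean identification $V^\vee=G\cdot\PPP(L_j)$ in the second step: one has to use that $V^\vee$ is by definition the image of the projectivized conormal variety of $V$, that this image is closed (the conormal variety being projective), that for smooth $V$ it is the union of the projectivized conormal spaces over all points of $V$, and that transitivity of $G$ collapses this union to a single orbit. For these dual-variety facts I would cite \cite{gkz1} rather than reprove them. I would also flag at the outset that the hypothesis $K=\CCC$ is what makes the Nullstellensatz and the classical theory of dual varieties available here.
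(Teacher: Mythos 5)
The paper contains no proof of Proposition \ref{prop:gkz1.8} for you to be measured against: it is imported from Gelfand--Kapranov--Zelvinsky \cite{gkz1} (``c.f.\ Proposition 1.8'') and used as a black box. Judged on its own, your argument is correct and is essentially the standard dual-variety proof underlying the cited statement. The chain of reductions is sound: since $\Delta$ is the irreducible equation of the hypersurface $V^\vee$, the Nullstellensatz over $\CCC$ converts $\Delta\mid P$ into vanishing of $P$ on $V^\vee$; the hyperplanes containing the tangent space of the Segre cone at the coordinate point indexed by $(j_1,\ldots,j_d)$ form exactly the coordinate subspace $L_j$ cut out by the variables $T_{i_1\ldots i_d}$ with $|\{k\mid i_k\neq j_k\}|\leq 1$, and membership in that monomial ideal is precisely the monomial-wise condition of the statement (so your ``only if'' direction correctly needs no invariance); and for ``if'', smoothness of the Segre plus projectivity of the conormal variety give that the cone over $V^\vee$ is the full $\glin^{\times d}$-orbit of $L_{(1,\ldots,1)}$, while your observation that each variable has multidegree $(1,\ldots,1)$ (so a homogeneous $P$ has equal degree in every direction) together with $\glin=\slin\cdot(\mathrm{scalars})$ over $\CCC$ upgrades $\slin^{\times d}$-invariance to $\glin^{\times d}$-semi-invariance, so vanishing on one translate propagates to all of $V^\vee$; this also recovers the constraint $(l_k+1)\mid\deg P$. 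The hypotheses you flag at the outset --- existence of the hyperdeterminant, i.e.\ the format condition of Theorem \ref{thm:gkz1.3}, and $K=\CCC$ --- are indeed tacit in the statement as the paper uses it. The only caveat is that you cite rather than prove several dual-variety facts (irreducibility of $V^\vee$, closedness of the image of the conormal variety), which is reasonable given that the paper itself cites \cite{gkz1} for the entire proposition.
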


From now on, we consider the case where $d=3$, $l_1=l_2$ and
$l_3=1$ and we set $n=l_1+1$.
We recall the following result which is a special case of
\cite[Corollary 3.8]{gkz1}.

\begin{prop}
\mylabel{prop:gkz3.8}
The degree of the hyperdeterminant of format
$n-1$, $n-1$, $1$ is $2n(n-1)$.
\end{prop}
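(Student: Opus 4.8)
The plan is to invoke the general degree formula of Gelfand--Kapranov--Zelevinsky for hyperdeterminants and specialize it to the format $n-1, n-1, 1$. Recall that \cite[Theorem 3.4]{gkz1} gives a generating function $N(z_1,\ldots,z_d)$ for the degree $N(l_1,\ldots,l_d)$ of the hyperdeterminant of format $l_1,\ldots,l_d$; in fact \cite[Corollary 3.8]{gkz1} provides the closed-form expression from which our statement is extracted by setting $d=3$, $l_1=l_2=n-1$, $l_3=1$. So strictly speaking the proposition is already contained in the cited corollary, and the task is merely to record the arithmetic simplification. I would first quote the relevant formula verbatim, then carry out the substitution.

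Concretely, the GKZ degree formula for three factors can be written as a coefficient extraction
\[
N(l_1,l_2,l_3)=\text{coefficient of }z_1^{l_1}z_2^{l_2}z_3^{l_3}\text{ in }\frac{1}{(1-(z_1+z_2+z_3)+\text{lower-order symmetric terms})^2}
\]
or, more usefully here, via the explicit integral/residue expression in \cite[Corollary 3.8]{gkz1}. I would substitute $l_3=1$ first: since the third factor contributes only a small range of exponents, the generating function in $z_3$ truncates, and $N(l_1,l_2,1)$ reduces to a one-parameter family of coefficients in $z_1,z_2$ alone. Then I would set $l_1=l_2=n-1$ and simplify; the binomial sums that arise should telescope or collapse by a Vandermonde-type identity to the clean value $2n(n-1)$.

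As a sanity check, and to make the argument self-contained modulo \cite{gkz1}, I would verify the small cases directly against Example \ref{ex:inv}: for $n=2$ the formula gives $2\cdot 2\cdot 1 = 4$, matching the degree-$4$ generator $f_{11}^2-4f_{02}f_{20}$; for $n=3$ it gives $2\cdot 3\cdot 2 = 12$, matching the degree-$12$ generator listed in Example \ref{ex:inv}(2) (it is a polynomial of degree $4$ in the $\xi_k$, hence degree $12$ in the $f_{k,3-k}$, each of which is a cubic in the $T_{ijk}$); and for $n=4$ it gives $2\cdot 4\cdot 3 = 24$, matching the second generator in Example \ref{ex:inv}(3), which is cubic in the $\xi_k$, hence of degree $3\cdot 4 = 12$ in the $f$'s but degree $24$ in the entries of $T$ once one accounts for $f_{k,4-k}$ having degree $4$. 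The main obstacle is purely bookkeeping: one must be careful about which "degree" is meant in \cite[Corollary 3.8]{gkz1}—the degree of the hyperdeterminant as a polynomial in the tensor entries—and make sure the specialization of the multivariate generating function is performed without arithmetic slips. No conceptual difficulty is expected beyond correctly transcribing and specializing the GKZ formula.
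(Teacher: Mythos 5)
Your approach coincides with the paper's: the proposition is given there without proof, explicitly as ``a special case of \cite[Corollary 3.8]{gkz1}'', so quoting the GKZ degree formula and specializing to $d=3$, $l_1=l_2=n-1$, $l_3=1$ is exactly what the paper does, and your numerical checks for $n=2,3$ confirm the value $2n(n-1)$. One small slip in your $n=4$ sanity check: the second generator in Example \ref{ex:inv} is cubic in the $f_{k,4-k}$ and hence has degree $12$ (not $24$) in the $T_{ijk}$; it is its square (and the cube of the first generator) that has degree $24$, which is how the paper's final example uses the degree count.
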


Since the hyperdeterminant of format $n-1$, $n-1$, $1$
is invariant under the action of
$\slin(n,\CCC)\times \slin(n,\CCC)\times\slin(2,\CCC)$,
it can be expressed as a polynomial of 
$f_{0,n}$, $f_{1,n-1}$, \ldots, $f_{n,0}$
by Theorem \ref{thm:m=n}.

Let $U_0$, $U_1$, \ldots, $U_n$ be indeterminates.
We introduce the degree reverse lexicographic order with
$U_n>U_{n-1}>\cdots>U_0$ on $\CCC[U_0, \ldots, U_n]$.
Note that for $g\in\CCC[U_0,\ldots, U_n]$ with
$g\neq 0$, 
$g(f_{0,n}$, $f_{1,n-1}$, \ldots, $f_{n,0})\neq 0$,
since $f_{0,n}$, \ldots, $f_{n,0}$ are 
algebraically independent over $\CCC$.

\begin{lemma}
\mylabel{lem:lm rel}
Let $a_0$, $a_1$, \ldots, $a_n$ and $b_0$, $b_1$, \ldots, $b_n$
be non-negative integers.
Then
$$
\lm(f_{0,n})^{a_0}\cdots\lm(f_{n,0})^{a_n}<
\lm(f_{0,n})^{b_0}\cdots\lm(f_{n,0})^{b_n}
$$
if and only if
$$
U_0^{a_0}\cdots U_n^{a_n}<
U_0^{b_0}\cdots U_n^{b_n}.
$$
In particular, for any $g\in\CCC[U_0$, \ldots, $U_n]$
with $g\neq 0$,
$$
\lm(g(f_{0,n},\ldots, f_{n,0}))=
\lm(g)(\lm(f_{0,n}),\ldots,\lm(f_{n,0})).
$$
\end{lemma}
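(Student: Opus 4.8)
The plan is to reduce both inequalities in the statement to one and the same elementary condition on the exponent vectors $(a_0,\ldots,a_n)$ and $(b_0,\ldots,b_n)$. First I would invoke Remark~\ref{rem:lm f}, which gives $\lm(f_{k,n-k})=T_{111}\cdots T_{kk1}\,T_{k+1,k+1,2}\cdots T_{nn2}$; in particular only the $2n$ diagonal variables $T_{ii1},T_{ii2}$ $(1\le i\le n)$ occur in these monomials. From this one reads off that in $\lm(f_{0,n})^{a_0}\cdots\lm(f_{n,0})^{a_n}$ the exponent of $T_{ii1}$ equals $a_i+a_{i+1}+\cdots+a_n$, the exponent of $T_{ii2}$ equals $a_0+\cdots+a_{i-1}$, and the total degree equals $n(a_0+\cdots+a_n)$; the same formulas hold with $b$ in place of $a$. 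Consequently, if $\sum_k a_k\ne\sum_k b_k$, then, since both the monomial order on $K[T]$ and the degree reverse lexicographic order on $\CCC[U_0,\ldots,U_n]$ refine total degree, each of the two inequalities is equivalent to $\sum_k a_k<\sum_k b_k$, and we are done in that case.

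Now suppose $\sum_k a_k=\sum_k b_k$ and $a\ne b$, and let $j$ be the least index with $a_j\ne b_j$; note that $j\le n-1$, since $a_k=b_k$ for all $k<n$ together with $\sum_k a_k=\sum_k b_k$ would force $a_n=b_n$. Using the exponent formulas above I would check that $T_{ii1}$ carries equal exponents in $\lm(f_{0,n})^{a_0}\cdots\lm(f_{n,0})^{a_n}$ and $\lm(f_{0,n})^{b_0}\cdots\lm(f_{n,0})^{b_n}$ for all $i\le j$, whereas the exponent of $T_{j+1,j+1,1}$ differs, with $a$-exponent minus $b$-exponent equal to $b_j-a_j\ne 0$. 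Since in the monomial order on $K[T]$ the variable $T_{j+1,j+1,1}$ is preceded only by $T_{111},\ldots,T_{jj1}$ among the variables actually occurring in these two monomials, it is the largest variable at which they disagree, so $\lm(f_{0,n})^{a_0}\cdots\lm(f_{n,0})^{a_n}<\lm(f_{0,n})^{b_0}\cdots\lm(f_{n,0})^{b_n}$ if and only if $a_j>b_j$. On the other hand, under the degree reverse lexicographic order with $U_n>\cdots>U_0$, two monomials of equal degree compare by the sign of the last nonzero entry of $(a_n-b_n,\ldots,a_0-b_0)$, which is $a_j-b_j$; hence $U_0^{a_0}\cdots U_n^{a_n}<U_0^{b_0}\cdots U_n^{b_n}$ if and only if $a_j>b_j$ as well. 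This establishes the equivalence.

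For the ``in particular'' statement I would use that the leading monomial of a product equals the product of the leading monomials, so $\lm\!\left(\prod_k f_{k,n-k}^{\alpha_k}\right)=\prod_k\lm(f_{k,n-k})^{\alpha_k}$; by the equivalence just proved these monomials are pairwise distinct as $\alpha$ varies and are ordered exactly as the monomials $\prod_k U_k^{\alpha_k}$. Hence, writing $g=\sum_\alpha c_\alpha U_0^{\alpha_0}\cdots U_n^{\alpha_n}$ and substituting, the summand whose monomial corresponds to $\lm(g)$ has strictly the largest leading monomial among all summands and so cannot be cancelled (each $f_{k,n-k}$ is monic with respect to our order), giving $\lm(g(f_{0,n},\ldots,f_{n,0}))=\lm(g)(\lm(f_{0,n}),\ldots,\lm(f_{n,0}))$. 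The one genuinely fiddly point is the bookkeeping in the second paragraph: verifying that $T_{j+1,j+1,1}$ is exactly the largest variable at which the two products differ, and that the sign of the discrepancy is the one matching the reverse lexicographic tie-break; the rest is formal.
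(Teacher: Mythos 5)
Your proof is correct and follows essentially the same route as the paper: both compute the exponent of $T_{ii1}$ in $\lm(f_{0,n})^{a_0}\cdots\lm(f_{n,0})^{a_n}$ as the partial sum $a_i+\cdots+a_n$ and then observe that the (degree) lexicographic comparison of these exponents is precisely the degree reverse lexicographic comparison of $U_0^{a_0}\cdots U_n^{a_n}$. Your more detailed handling of the ``in particular'' clause merely spells out what the paper leaves as an immediate consequence of the first part.
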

\begin{proof}
Since $\lm(f_{k,n-k})=T_{111}\cdots T_{kk1}T_{k+1,k+1,2}\cdots T_{nn2}$,
the exponent of $T_{kk1}$ of
$\lm(f_{0,n})^{c_0}\cdots\lm(f_{n,0})^{c_n}
=c_k+c_{k+1}+\cdots+c_n$
for any non-negative integers $c_0$, $c_1$, \ldots, $c_n$.
Therefore,
$$
\lm(f_{0,n})^{a_0}\cdots\lm(f_{n,0})^{a_n}<
\lm(f_{0,n})^{b_0}\cdots\lm(f_{n,0})^{b_n}
$$
if and only if
there exists $h$ with $0\leq h\leq n$ such that
$\sum_{k=h}^n a_k<\sum_{k=h}^n b_k$ and
$\sum_{k=h'}^n a_k=\sum_{k=h'}^n b_k$ for any $h'$ with $0\leq h'<h$.
That is,
$$
U_0^{a_0}\cdots U_n^{a_n}<
U_0^{b_0}\cdots U_n^{b_n}.
$$
The last sentence of the lemma follows from the 
first part of the lemma.
\end{proof}

Now we state a corollary of Proposition \ref{prop:gkz1.8}.

\begin{cor}
\mylabel{cor:nec lm}
Let $\Phi(U_0$, $U_1$, \ldots, $U_n)$ be the element of
$\CCC[U_0$, \ldots, $U_n]$ such that 
$\Phi(f_{0,n}$, $f_{1,n-1}$, \ldots, $f_{n,0})$ is the 
hyperdeterminant of format $n-1$, $n-1$, $1$.
Then $\supp(\lm(\Phi))\cap\{U_0$, $U_1\}\neq \emptyset$.
\end{cor}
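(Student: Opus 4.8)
The plan is to argue by contradiction, applying the divisibility criterion of Proposition \ref{prop:gkz1.8} to the hyperdeterminant itself. Write $D$ for the hyperdeterminant of format $n-1$, $n-1$, $1$, so that $D = \Phi(f_{0,n}, \ldots, f_{n,0})$. Since $D$ is homogeneous of positive degree $2n(n-1)$ (Proposition \ref{prop:gkz3.8}; the existence of $D$ forces $n \geq 2$ by Theorem \ref{thm:gkz1.3}), we have $\lm(D) \neq 1$, hence by Lemma \ref{lem:lm rel} also $\lm(\Phi) \neq 1$ and
$$
\lm(D) = \lm(\Phi)\bigl(\lm(f_{0,n}), \ldots, \lm(f_{n,0})\bigr).
$$

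Now suppose, for contradiction, that $\supp(\lm(\Phi)) \cap \{U_0, U_1\} = \emptyset$, so that $\lm(\Phi) = \prod_{k=2}^{n} U_k^{a_k}$ for non-negative integers $a_2, \ldots, a_n$ not all zero. Using $\lm(f_{k,n-k}) = T_{111} \cdots T_{kk1} T_{k+1,k+1,2} \cdots T_{nn2}$ from Remark \ref{rem:lm f}, the next step is to read off
$$
\isupp(\lm(D)) = \bigcup_{\substack{2 \leq k \leq n \\ a_k > 0}} \bigl(\{(i,i,1) : 1 \leq i \leq k\} \cup \{(i,i,2) : k < i \leq n\}\bigr).
$$
Two features matter here: every element of $\isupp(\lm(D))$ has equal first two coordinates, and neither $(1,1,2)$ nor $(2,2,2)$ lies in $\isupp(\lm(D))$, since each would require some $k \leq 1$ with $a_k > 0$. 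The decisive step is then to test the criterion of Proposition \ref{prop:gkz1.8} at the index triple $(j_1, j_2, j_3) = (1,2,2) \in [n] \times [n] \times [2]$, which is legitimate as $n \geq 2$: for any $(i,i,1) \in \isupp(\lm(D))$ the third coordinate already disagrees with $j_3$, and matching the first two would force $i = 1$ and $i = 2$ simultaneously, while for any $(i,i,2) \in \isupp(\lm(D))$ one has $i \geq 3$ so the first two coordinates both disagree — in either case at least two disagreements. Hence no element of $\isupp(\lm(D))$ differs from $(1,2,2)$ in at most one coordinate.

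This contradicts Proposition \ref{prop:gkz1.8}: $D$ is homogeneous and invariant under $\slin(n,\CCC) \times \slin(n,\CCC) \times \slin(2,\CCC)$ (Proposition \ref{prop:gkz1.4}), and it is divisible by the hyperdeterminant of format $n-1$, $n-1$, $1$ (being equal to it), so the criterion must hold for the monomial $g = \lm(D)$ of $D$ and the triple $(1,2,2)$. Therefore $\supp(\lm(\Phi)) \cap \{U_0, U_1\} \neq \emptyset$. The only genuine content of the argument is the choice of the triple $(1,2,2)$ that defeats the divisibility criterion once $U_0$ and $U_1$ are absent from $\lm(\Phi)$; everything else is routine bookkeeping with Lemma \ref{lem:lm rel} and Remark \ref{rem:lm f}, so I do not anticipate any real obstacle.
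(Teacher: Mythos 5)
Your proof is correct and follows essentially the same route as the paper: assume $U_0,U_1$ are absent from $\lm(\Phi)$, use Lemma \ref{lem:lm rel} and the explicit form of $\lm(f_{k,n-k})$ to locate $\supp(\lm(D))$ inside $\{T_{111},\ldots,T_{nn1},T_{332},\ldots,T_{nn2}\}$, and then show the divisibility criterion of Proposition \ref{prop:gkz1.8} fails at the triple $(1,2,2)$, a contradiction. Your write-up is just slightly more explicit than the paper's (spelling out the coordinate disagreements and the side remarks that $n\geq 2$ and $\lm(\Phi)\neq 1$), but the key idea and the test triple are identical.
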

\begin{proof}
Assume the contrary.
Then 
since $\lm(f_{k,n-k})=T_{111}\cdots T_{kk1}T_{k+1,k+1,2}\cdots T_{nn2}$
for $k=2$, $3$, \ldots, $n$,
we see, by Lemma \ref{lem:lm rel}, that
\begin{eqnarray*}
&&\supp(\lm(\Phi(f_{0,n}, \ldots, f_{n,0})))\\
&=&\supp(\lm(\Phi)(\lm(f_{0,n}), \ldots, \lm(f_{n,0})))\\
&\subset &\{T_{111}, T_{221}, \ldots, T_{nn1},
T_{332}, T_{442}, \ldots, T_{nn2}\}.
\end{eqnarray*}
Therefore, we see that the pair
$\lm(\Phi(f_{0,n}, \ldots, f_{n,0}))$ and
$(1,2,2)\in[n]\times[n]\times[2]$ does not satisfy the condition
of Proposition \ref{prop:gkz1.8}.
This is a contradiction.
\end{proof}

\begin{example}
\rm
Consider the case where $n=4$.
Then the degree of the hyperdeterminant of format $3$, $3$, $1$
with respect to $T_{ijk}$ is 24 by Proposition \ref{prop:gkz3.8}.
Therefore, it is a polynomial of $f_{04}$, $f_{13}$, \ldots,
$f_{40}$ of degree 6.
By Example \ref{ex:inv} \ref{item:n=4},
$\slin(4,\CCC)\times \slin(4,\CCC)\times \slin(2,\CCC)$-invariant of 
degree 6 with respect to $f_{04}$, $f_{13}$, \ldots, $f_{40}$ is a
linear combination of
$(f_{22}^2-3f_{13}f_{31}+12f_{04}f_{40})^3$ and
$(2f_{22}^3+27f_{04}f_{31}^2+27f_{13}^2f_{40}-9f_{13}f_{22}f_{31}-72f_{04}f_{22}f_{40})^2$.
Therefore, we see that
$4(f_{22}^2-3f_{13}f_{31}+12f_{04}f_{40})^3-
(2f_{22}^3+27f_{04}f_{31}^2+27f_{13}^2f_{40}-9f_{13}f_{22}f_{31}-72f_{04}f_{22}f_{40})^2$
is the hyperdeterminant of format 3, 3, 1,
since by Corollary \ref{cor:nec lm}, term with $f_{22}^6$ must vanish.
\end{example}

\begin{remark}
\rm
If $n=m+1$, then
the determinant \refeq{eqn:inv det} is the
hyperdeterminant of format $m-1$, $m$, 1
\cite[Examples 4.9 and 4.10]{gkz1}.
\end{remark}

\end{document}